\newtheorem{theorem}{Theorem}[section]
\newtheorem{lemma}[theorem]{Lemma}
\newtheorem{proposition}[theorem]{Proposition}
\newtheorem{corollary}[theorem]{Corollary}
\newdefinition{definition}[theorem]{Definition}
\newdefinition{example}[theorem]{Example}
\newdefinition{remark}[theorem]{Remark}
\newproof{proof}{Proof}
\def\ps@pprintTitle{%
 \let\@oddhead\@empty
 \let\@evenhead\@empty
 \def\@oddfoot{}%
 \let\@evenfoot\@oddfoot}
\begin{document}

\begin{frontmatter}

\title{Generalized comonotonicity and new axiomatizations of Sugeno integrals on bounded distributive lattice \footnote{Preprint of an article published by Elsevier in the International Journal of Approximate Reasoning 81 (2017), 183-192. It is available online at: www.sciencedirect.com/science/article/pii/S0888613X16302481}}

\author[up]{Radom\'ir Hala\v{s}}
\ead{radomir.halas@upol.cz}

\author[up,stu]{Radko Mesiar}
\ead{radko.mesiar@stuba.sk}

\author[up,sav]{Jozef P\'ocs}
\ead{pocs@saske.sk}

\address[up]{Department of Algebra and Geometry, Faculty of Science, Palack\'y University Olomouc, 17. listopadu 12, 771 46 Olomouc, Czech Republic}
\address[stu]{Department of Mathematics and Descriptive Geometry, Faculty of Civil Engineering, Slovak University of Technology in Bratislava, Radlinsk\'eho 11, 810 05 Bratislava 1, Slovakia}
\address[sav]{Mathematical Institute, Slovak Academy of Sciences, Gre\v s\'akova 6, 040 01 Ko\v sice, Slovakia}

\begin{abstract}

Two new generalizations of the relation of comonotonicity of lattice-valued
vectors are introduced and discussed. These new relations coincide on
distributive lattices and they share several properties with the 
comonotonicity for the real-valued
vectors (which need not hold for $L$-valued vectors
comonotonicity, in general). Based on these newly introduced generalized types
of comonotonicity of $L$-valued vectors, several new axiomatizations of $L$-valued
Sugeno integrals are introduced. One of them brings a substantial decrease of computational complexity when checking an aggregation function to be a Sugeno integral.
\end{abstract}

\begin{keyword}
bounded distributive lattice\sep comonotonicity\sep information fusion\sep Sugeno integral
\end{keyword}

\end{frontmatter}

\section{Introduction}\label{sec:1}

Sugeno \cite{Sug74} has introduced an integral (called $F$-integral, or fuzzy integral in the original source) for fusion of information obtained in a fuzzy set characterized by its membership function. This integral was acting on $[0,1]$ due to the range of membership functions of fuzzy sets, and it was built by means of the basic fuzzy connectives $\min$ and $\max$, following the introduction of fuzzy sets due to Zadeh \cite{Zadeh1965}. Recall that the Sugeno integral is formally introduced as the Lebesgue and Choquet integrals, replacing the standard arithmetic operations $+$ and $\cdot$ on the real unit interval $[0,1]$ by the lattice operations max and min on the bounded chain $[0,1]$. Similarly as the fuzzy sets with $[0,1]$-valued membership functions were generalized by Goguen \cite{Goguen1967} into $L$-valued fuzzy sets, where $L$ stands for a bounded distributive lattice, also the Sugeno integral can be generalized to act on $L$. Formally, even non-distributive lattices $L$ could be considered, however, then some ambiguities may occur. For example, the values of considered functionals with fixed values in boolean vectors (formally, the underlying fuzzy measure or capacity) need not be unique when two forms of formulas for the Sugeno integral, see Definition \ref{Sug_def}, are considered. Among several deep studies of $L$-valued Sugeno integrals we recall \cite{HMP,Marichal} and especially \cite{Couceiro}, where several equivalent axiomatic characterizations appear. Note that papers \cite{HMP} and \cite{Marichal} study the Sugeno integrals from the different point of views as it is intended in this paper. In a recent paper \cite{HMP} a new characterizing property of Sugeno integrals, based on the preservation of certain equivalence relations (the so-called compatibility), has been presented. In \cite{Marichal} it was shown that Sugeno integrals form a subclass of weighted lattice polynomial functions, which can be characterized by an important median based decomposition formula. 

Let us mention that various types of integrals have many applications within the theory of aggregation functions. To illustrate that the study of Sugeno integrals from various points of view is still very active area of aggregation functions, we refer the reader to recent papers \cite{IJAR1,IJAR4,HP1,HP2,IJAR3,IJAR2}. More specifically, 
in \cite{HP1,HP2} a new promising approach via so-called clone theory to a study of aggregation functions on bounded lattices has been started. We expect to apply these results for better understanding of properties of Sugeno integrals on distributive lattices.  
Moreover, following the spirit of papers \cite{IJAR1,IJAR3,IJAR2} dealing with certain non-additive measures, we expect to introduce Sugeno integrals on non-distributive lattices.

The axiomatic approach to Sugeno integrals was studied in several papers,
including \cite{MAR,nove1,nove2} and it is based on the notion of comonotonicity of real
functions (of real vectors in the case of discrete Sugeno integrals). However,
in the case of lattice-valued vectors, the relation of comonotonicity has some
undesirable properties. For example, there can exist a vector $\mathbf x$ such that
it is not comonotone with itself (i.e., $\mathbf x$ and $\mathbf x$ are not comonotone), neither
it is comonotone with an arbitrary constant vector $\mathbf c$, i.e., $\mathbf x$ and $\mathbf c$ are not comonotone. 
Thus a generalization of the comonotonicity relation for 
$L$-valued vectors avoiding the above mentioned defects is a challenging problem.

The aim of this
contribution is the introduction of generalized comonotonicity relations with
properties similar to the real-valued vectors comonotonicity and a subsequent
development of the Sugeno integral theory on bounded distributive lattices as
an important tool in $L$-valued information fusion. We introduce two new types of comonotonicity, named generalized comonotonicity  and dual generalized comonotonicity here, and we apply them to get new axiomatic characterizations of $L$-valued Sugeno integrals.

The organization of the paper is as follows: 
In Section \ref{sec:2} we recall the basic definitions and notions concerning the discrete Sugeno integrals. The notions of generalized comonotonicity and dual generalized comonotonicity are introduced and studied in Section \ref{sec:3}. New axiomatic characterizations of $L$-valued Sugeno integrals are discussed in Section \ref{sec:4}.

\section{Preliminaries}\label{sec:2}

Let $L$ be a bounded distributive lattice. Through the paper we denote by $0$ and $1$ the bottom and
the top element of $L$, respectively. 
Recall that a function $f\colon L^n\to L$, $n\geq 1$ being an integer, is called an aggregation function (on $L$) whenever it is monotone and satisfies two boundary conditions, $f(0,\dots,0) = 0$ and $f(1,\dots,1) = 1$.

For any integer $n\geq 1$ we set $[n]=\{1,\dots,n\}$. 
Recall that given an $L$-valued capacity $m\colon 2^{[n]}\to L$, i.e., a set function satisfying $m(X)\leq m(Y)$ for $X\subseteq Y\subseteq [n]$ and $m(\emptyset)=0$, $m([n])=1$, there are two equivalent expressions for the Sugeno integral.

\begin{definition}\label{Sug_def}
Let $n\geq 1$ be a positive integer and $m\colon 2^{[n]}\to L$ be an $L$-valued capacity. The Sugeno integral with respect to the capacity $m$ is defined by
\begin{equation}\label{Sugeno_def}
\mathsf{Su}_m(\mathbf{x})=\bigvee_{I\subseteq [n]} \big( m(I)\wedge \bigwedge_{i\in I}x_i\big)= \bigwedge_{I\subseteq [n]}\big( m([n]\smallsetminus I)\vee \bigvee_{i\in I}x_i\big),
\end{equation}
where $\mathbf{x}=(x_1,\dots,x_n)\in L^n$.
\end{definition}

Observe, that for any $L$-valued capacity $m$, the Sugeno integral $\mathsf{Su}_m$ is an aggregation function on $L$.

Two vectors $\mathbf{x}=(x_1,\dots,x_n)\in L^n$ and $\mathbf{y}=(y_1,\dots,y_n)\in L^n$ are said to be \textit{comonotone} if $x_i\leq x_j$ and $y_i\leq y_j$ or $x_i\geq x_j$ and $y_i\geq y_j$ for all pairs $i,j\in\{1,\dots,n\}$. 
Equivalently, $\mathbf{x},\mathbf{y}\in L^n$ are comonotone if and only if there is a permutation $\sigma$ of the set $[n]$ such that $x_{\sigma(1)}\leq\dots\leq x_{\sigma(i)}\leq\dots \leq x_{\sigma(n)}$ and $y_{\sigma(1)}\leq\dots\leq y_{\sigma(i)}\leq\dots \leq y_{\sigma(n)}$.

For $c\in L$, denote by $\mathbf{c}=(c,\dots,c)$ the constant vector. A function $f\colon L^n \to L$ is said to be
\begin{itemize}
\item[--] \textit{inf-homogeneous} if, for every $\mathbf{x}\in L^n$ and every $c\in L$, $f$ satisfies $f(\mathbf{c}\wedge \mathbf{x})= c\wedge f(\mathbf{x})$
\item[--] \textit{sup-homogeneous} if, for every $\mathbf{x}\in L^n$ and every $c\in L$, $f$ satisfies $f(\mathbf{c}\vee \mathbf{x})= c\vee f(\mathbf{x})$
\item[--] \textit{comonotone supremal} if $f(\mathbf{x}\vee \mathbf{y})=f(\mathbf{x})\vee f(\mathbf{y})$ for every pair of comonotone vectors $\mathbf{x},\mathbf{y}\in L^n$
\item[--] \textit{comonotone infimal} if $f(\mathbf{x}\wedge \mathbf{y})=f(\mathbf{x})\wedge f(\mathbf{y})$ for every pair of comonotone vectors $\mathbf{x},\mathbf{y}\in L^n$.
\end{itemize}
Let us note that if $L$ is a chain (e.g., the real line), inf(sup)-homogeneity and comonotone supremality (infimality) are commonly referred to as min(max)-homogeneity and comonotone maxitivity (minitivity) respectively.

We conclude this section with recalling the following well-known characterization of the discrete Sugeno integrals on bounded chains, 
see e.g. \cite{Couceiro,Grabisch et al 2009}.

\begin{proposition}\label{prop1}
Let $L$ be a bounded chain and $f\colon L^n\to L$, $n\geq 1$, be an aggregation function. The following conditions are equivalent: 
\begin{enumerate}
\item[\rm(i)] $f$ is a discrete Sugeno integral.
\item[\rm(ii)] $f$ is comonotone maxitive and min-homogeneous.
\item[\rm(iii)] $f$ is comonotone minitive and max-homogeneous.
\end{enumerate}
\end{proposition}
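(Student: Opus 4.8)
The plan is to prove the two implications (i)$\Rightarrow$(ii) and (ii)$\Rightarrow$(i) directly, and then to obtain (i)$\Leftrightarrow$(iii) for free by passing to the order-dual chain $L^{\partial}$. The computational backbone throughout is the following representation: on a chain, if $\mathbf{x}=(x_1,\dots,x_n)$ is already nondecreasingly ordered, $x_1\le\dots\le x_n$, and $A_i=\{i,i+1,\dots,n\}$, then
\begin{equation*}
\mathsf{Su}_m(\mathbf{x})=\bigvee_{i=1}^{n}\big(x_i\wedge m(A_i)\big).
\end{equation*}
This follows from the first formula in \eqref{Sugeno_def}: for a nonempty $I$ its least element $i=\min I$ satisfies $\bigwedge_{j\in I}x_j=x_i$ and $I\subseteq A_i$, so by monotonicity of $m$ the term indexed by $I$ is dominated by the one indexed by $A_i$, while each $A_i$ is itself admissible. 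Since the statement concerns a fixed $f$ at a fixed point, relabelling coordinates lets us always assume such an ordering.

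For (i)$\Rightarrow$(ii), assume $f=\mathsf{Su}_m$. Min-homogeneity is immediate from the first formula in \eqref{Sugeno_def} and distributivity of $L$: distributing $c$ over the join gives $\mathsf{Su}_m(\mathbf{c}\wedge\mathbf{x})=\bigvee_{I}\big(m(I)\wedge c\wedge\bigwedge_{i\in I}x_i\big)=c\wedge\mathsf{Su}_m(\mathbf{x})$, the empty-set term contributing $0$ throughout because $m(\emptyset)=0$. For comonotone maxitivity, choose a common permutation $\sigma$ ordering both $\mathbf{x}$ and $\mathbf{y}$; then $\mathbf{x}\vee\mathbf{y}$ is ordered by the same $\sigma$, and applying the displayed chain representation to all three vectors together with distributivity yields $\mathsf{Su}_m(\mathbf{x}\vee\mathbf{y})=\mathsf{Su}_m(\mathbf{x})\vee\mathsf{Su}_m(\mathbf{y})$.

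The implication (ii)$\Rightarrow$(i) is the heart of the matter and the step I expect to cost the most effort. Define the candidate capacity by $m(I)=f(\mathbf{1}_I)$, where $\mathbf{1}_I\in L^n$ has $i$-th coordinate $1$ for $i\in I$ and $0$ otherwise; monotonicity of $f$ and its boundary conditions make $m$ a capacity. Fix $\mathbf{x}$, ordered so that $x_1\le\dots\le x_n$, and write it as $\mathbf{x}=\bigvee_{i=1}^n\big(x_i\wedge\mathbf{1}_{A_i}\big)$ (the scalar $\wedge$ acting coordinatewise), which is checked coordinatewise using $\bigvee_{i\le j}x_i=x_j$. The crucial point is that each summand $x_i\wedge\mathbf{1}_{A_i}$ is a nondecreasing vector, hence all of them, and all their partial joins, are pairwise comonotone; this is precisely what upgrades the binary comonotone maxitivity to the $n$-fold join by induction. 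Evaluating, min-homogeneity gives $f(x_i\wedge\mathbf{1}_{A_i})=x_i\wedge f(\mathbf{1}_{A_i})=x_i\wedge m(A_i)$, so $f(\mathbf{x})=\bigvee_i\big(x_i\wedge m(A_i)\big)=\mathsf{Su}_m(\mathbf{x})$ by the chain representation. The obstacle here is less the algebra than carefully justifying that the running joins remain comonotone so that the binary axiom legitimately applies at each step.

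Finally, (i)$\Leftrightarrow$(iii) need not be proved from scratch. The second formula in \eqref{Sugeno_def} shows that $\mathsf{Su}_m$ on $L$ coincides with a Sugeno integral on the order-dual chain $L^{\partial}$ taken with respect to the dual capacity $I\mapsto m([n]\smallsetminus I)$; hence the class of Sugeno integrals is invariant under order reversal. Since comonotonicity is symmetric in $\le$ and $\ge$, passing to $L^{\partial}$ turns comonotone minitivity into comonotone maxitivity and max-homogeneity into min-homogeneity, so condition (iii) on $L$ is exactly condition (ii) on $L^{\partial}$. Applying the already established equivalence (i)$\Leftrightarrow$(ii) on $L^{\partial}$ then delivers (i)$\Leftrightarrow$(iii), completing the argument.
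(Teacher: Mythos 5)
Your proposal is correct, but there is nothing in the paper to compare it against directly: the paper does not prove Proposition \ref{prop1} at all, recalling it as a well-known result with a citation to \cite{Couceiro} and the book of Grabisch et al., and adding only the remark that ``the crucial step in the proof relies on finding an appropriate permutation $\sigma$ of the set $[n]$, such that $x_{\sigma(1)}\leq\dots\leq x_{\sigma(n)}$.'' Your argument is exactly the standard one this remark alludes to: the ordered chain representation $\mathsf{Su}_m(\mathbf{x})=\bigvee_{i}\big(x_{\sigma(i)}\wedge m(\{\sigma(i),\dots,\sigma(n)\})\big)$, the decomposition $\mathbf{x}=\bigvee_i\big(x_{\sigma(i)}\wedge\mathbf{1}_{\{\sigma(i),\dots,\sigma(n)\}}\big)$ into pairwise comonotone nondecreasing pieces (which legitimizes iterating the binary comonotone maxitivity), and min-homogeneity to evaluate each piece. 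It is worth noting that your key device in (ii)$\Rightarrow$(i), namely defining the capacity by $m(I)=f(\mathbf{1}_I)$, is precisely what the paper itself uses later in the proof of Theorem \ref{thm4} for general bounded distributive lattices; there, since no ordering permutation exists, your ordered decomposition is replaced by two sandwich inequalities $\mathsf{Su}_m(\mathbf{x})\leq f(\mathbf{x})\leq \mathsf{Su}_m(\mathbf{x})$ obtained from the two expressions in \eqref{Sugeno_def}. Your duality reduction of (iii) to (ii) via the order-dual chain and the dual capacity $I\mapsto m([n]\smallsetminus I)$ is also sound, and is consistent with how the paper handles dual statements (e.g.\ Remark \ref{rem5}) by invoking ``dual arguments.''
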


Let us note that the crucial step in the proof relies on finding an appropriate permutation $\sigma$ of the set $[n]$, such that $x_{\sigma(1)}\leq\dots\leq x_{\sigma(n)}$.
Obviously, this is, in general, no longer possible provided $L$ contains incomparable elements.

\section{Generalized and dually generalized comonotonicity}\label{sec:3}

In this section we introduce the notions of generalized and dually generalized comonotonicity and study their basic properties.

\subsection{Definition and intuition}\label{sec:31}

The notion of comonotonicity of vectors can be introduced for any poset, and, in particular, for any bounded distributive lattice. 
However, then some genuine properties of comonotonicity of real vectors are lost. For example, for real valued vectors (functions), for any vector $\mathbf{x}$
and any constant vector $\mathbf{c}$ the couple $\mathbf{x},\mathbf{c}$ is comonotone. Similarly, the couple $\mathbf{x},\mathbf{x}$ is comonotone for any real valued vector, but not for $L$-valued vectors once $L$ is not a chain. Of course, these problems are caused by a possible incomparability of some elements of $L$. To eliminate the above mentioned
defects, we introduce a new concept of generalized comonotonicity and its dual counterpart.

\begin{definition}\label{def_gcom}
Let $L$ be a lattice. Given two $n$-ary vectors  $\mathbf{x}$ and $\mathbf{y}$, we call them \textit{generalized comonotone} (g-comonotone, for short) if for every pair $i,j\in \{1,\dots,n\}$ we have 
\begin{equation}\label{gcom_def}
(x_i\vee y_i)\wedge(x_j\vee y_j) = (x_i\wedge x_j)\vee (y_i\wedge y_j).
\end{equation}

Two $n$-ary vectors  $\mathbf{x}$ and $\mathbf{y}$ are called \textit{dually generalized comonotone} if for every pair $i,j\in \{1,\dots,n\}$
we have
\begin{equation}\label{dgcom_def}
(x_i\wedge y_i)\vee (x_j\wedge y_j)=(x_i\vee x_j)\wedge (y_i\vee y_j).
\end{equation}
\end{definition}

We call an $n$-ary aggregation function $f$ on $L$ \textit{g-comonotone supremal}, if 
$$f(\mathbf{x}\vee \mathbf{y})=f(\mathbf{x})\vee f(\mathbf{y}),$$
 and dually, $f$ is said to be \textit{g-comonotone infimal}, if 
$$ f(\mathbf{x}\wedge \mathbf{y})=f(\mathbf{x})\wedge f(\mathbf{y}) $$
for any pair of generalized comonotone vectors $\mathbf{x}, \mathbf{y}\in L^n$. The similar notions can be applied in the case of dually generalized comonotone vectors.

Let us remark that the identities \eqref{gcom_def} and \eqref{dgcom_def} are already known in the literature as so-called interchange identities. Interchange identity have its origin in category theory where it is related to a characterization of natural transformations of functors, we refer the reader to a classic book \cite{Mac}.
Not going into details, similar identities relating the Lie and the Jordan products are also deeply studied in computer algebra.

Formally, let $\bullet$ and $\circ$ be two binary operations on a set. Then the following identity 
$$ (a\circ b)\bullet (c\circ d)= (a\bullet c)\circ( b\bullet d) $$
is called the interchange identity (compare also the commuting of aggregation functions discussed in
\cite{SAM}).  

It can be easily seen that putting $\circ=\vee$ and $\bullet=\wedge$ we obtain the identity \eqref{gcom_def}, and similarly, $\circ=\wedge$ and $\bullet=\vee$ yields the dual identity \eqref{dgcom_def}.

Regarding $\vee$ and $\wedge$ as vertical and horizontal compositions respectively, the identity \eqref{gcom_def} expresses the equivalence of two decompositions of a $2\times 2$ array:
$$
(x_i\vee y_i)\wedge(x_j\vee y_j)\; \equiv \;
\left(\begin{array}{c|c}  
  x_i & x_j \\  
  y_i & y_j  
\end{array}\right) 
=
\left(\begin{array}{cc}  
  x_i & x_j \\ \hline 
  y_i & y_j  
\end{array}\right)
\; \equiv \;
(x_i\wedge x_j)\vee (y_i\wedge y_j). 
$$

Hence, two vectors $\mathbf{x},\mathbf{y}\in L^n$ are g-comonotone if and only if for any choice $i,j$ of indexes the above interchange identity is fulfilled.

To give more intuition concerning g-comonotonicity, Figure \ref{fig3} schematically illustrates configuration in a lattice when the inputs $x_i,x_j,y_i,y_j$ are pairwise incomparable elements.

\begin{figure}
\begin{center}
\includegraphics[scale=1.0]{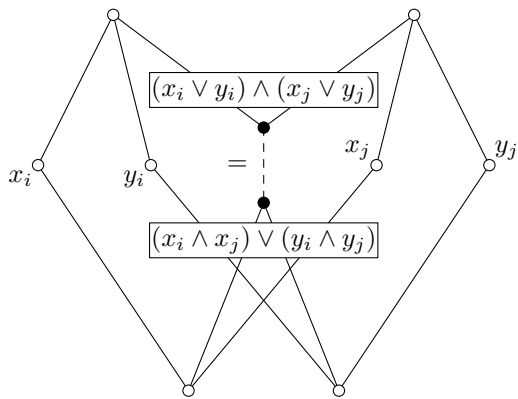}
\end{center}
\caption{Lattice configuration.}
\label{fig3}
\end{figure}

\subsection{Properties}\label{sec:32}

We start an investigation of the introduced notions with the following simple lemma, relating comparability and classical comonotonicity with g-comonocity and dual g-comonocity respectively.

\begin{lemma}\label{lem31}
Let $L$ be a lattice. If $\mathbf{x},\mathbf{y}\in L^n$ are comonotone or comparable, then they are g-comonotone as well as dually g-comonotone.
\end{lemma}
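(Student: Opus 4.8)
The plan is to verify the two interchange identities \eqref{gcom_def} and \eqref{dgcom_def} directly, by a short case analysis that exploits absorption to reduce each identity to an elementary monotonicity inequality. A pleasant feature I would emphasize is that no distributivity is required, so the argument works in an arbitrary lattice.

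First I would treat the \emph{comparable} case. Assuming without loss of generality that $\mathbf{x}\le\mathbf{y}$ coordinatewise, we have $x_i\vee y_i=y_i$ and $x_i\wedge y_i=x_i$ for every $i$. Then the left-hand side of \eqref{gcom_def} collapses to $y_i\wedge y_j$, while its right-hand side is $(x_i\wedge x_j)\vee(y_i\wedge y_j)$; these agree precisely because monotonicity of $\wedge$ gives $x_i\wedge x_j\le y_i\wedge y_j$, so the join absorbs the first term. The dual identity \eqref{dgcom_def} is handled symmetrically: its two sides reduce to $x_i\vee x_j$ and $(x_i\vee x_j)\wedge(y_i\vee y_j)$, which coincide because $x_i\vee x_j\le y_i\vee y_j$. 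The opposite comparison $\mathbf{y}\le\mathbf{x}$ is identical after interchanging the roles of $\mathbf{x}$ and $\mathbf{y}$.

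Next I would treat the \emph{comonotone} case. Fixing a pair $i,j$, by symmetry I may assume the aligned ordering $x_i\le x_j$ and $y_i\le y_j$, whence $x_i\wedge x_j=x_i$, $y_i\wedge y_j=y_i$, $x_i\vee x_j=x_j$ and $y_i\vee y_j=y_j$. For \eqref{gcom_def} the right-hand side becomes $x_i\vee y_i$, and the left-hand side equals the same, since $x_i\vee y_i\le x_j\vee y_j$ forces the meet to absorb down to the smaller term. Identity \eqref{dgcom_def} is checked in the same way, both sides reducing to $x_j\wedge y_j$ via the inequality $x_i\wedge y_i\le x_j\wedge y_j$.

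The verification is entirely routine, and there is no substantive obstacle; the only point worth flagging is exactly what makes the lemma useful. Unlike many lattice identities of this shape, none of these steps invoke distributivity: the whole argument rests on absorption together with the monotonicity of $\wedge$ and $\vee$. Hence comparability and classical comonotonicity each imply both g-comonotonicity and dual g-comonotonicity in any lattice $L$.
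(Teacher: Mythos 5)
Your proof is correct and follows essentially the same route as the paper's: a case split into comparable and comonotone vectors, a without-loss-of-generality ordering for each fixed pair $i,j$, and collapsing both sides of \eqref{gcom_def} and \eqref{dgcom_def} via absorption and monotonicity, with no appeal to distributivity. The only difference is that you write out the dual identity explicitly where the paper dismisses it as ``analogous.''
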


\begin{proof}
Let $\mathbf{x},\mathbf{y}\in L^n$ be comonotone vectors and $i,j\in\{1,\dots,n\}$ be any two indexes. Without loss of generality, assume that $x_i\leq x_j$ and $y_i\leq y_j$. 
Then it is easily seen that 
$$ (x_i\vee y_i)\wedge (x_j\vee y_j)= (x_i\vee y_i) = (x_i\wedge x_j) \vee (y_i\wedge y_j), $$
proving that $\mathbf{x}$ and $\mathbf{y}$ are g-comonotone. Similarly, one can show that $\mathbf{x}$ and $\mathbf{y}$ are also dually g-comonotone.

Further, let $\mathbf{x}$ and $\mathbf{y}$ be two comparable vectors. Assume that $\mathbf{x}\leq\mathbf{y}$. Then for any two indexes $i,j\in\{1,\dots,n\}$ we have $x_i\leq y_i$ and $x_j\leq y_j$. From this we obtain
$$ (x_i\vee y_i)\wedge (x_j\vee y_j)= y_i\wedge y_j= (x_i\wedge x_j) \vee (y_i\wedge y_j).$$

The dual g-comonotonicity can be proved analogously.
\qed
\end{proof}

The previous lemma shows that the notions of g-comonotonicity and dual g-comonotonicity generalize that of comonotonicity and  comparability. 
Moreover, for a constant vector $\mathbf{c}$, a vector $\mathbf{x}\in L^n$ is comonotone with $\mathbf{c}$ only if the set 
$\{x_1,\dots,x_n\}$ forms a chain in $L$. Similarly, if $c\in L\setminus\{0,1\}$, there are vectors which are not comparable with $\mathbf{c}$. However, for any vector $\mathbf{x}\in L^n$, $\mathbf{x}$ and $\mathbf{c}$ are g-comonotone (dually g-comonotone). 
Indeed, substituting $y_i=y_j=c$ in (\ref{gcom_def}), then applying distributivity of $L$, we obtain the equality
$$(x_i\vee c)\wedge (x_j\vee c)=(x_i\wedge x_j)\vee c=(x_i\wedge x_j)\vee (c\wedge c)$$
for any $i,j\in \{1,\dots,n\}$. Similarly, the dual g-comonotonicity (\ref{dgcom_def}) for such a pair of vectors can be verified.

\begin{example}\label{ex1}
Consider the product $L=[0,1]\times [0,1]$. For a fixed element $\mathbf{x}=(x_1,x_2)\in [0,1]^2$, consider the sets of all vectors $\mathbf{y}$ such that $\mathbf{x}$ and $\mathbf{y}$ are comonotone, comparable and g-comonotone, respectively. Particularly, we put $A(\mathbf{x})=\{\mathbf{y}\mid \mathbf{x},\mathbf{y}\ \mbox{comonotone}\}$, $B(\mathbf{x})=\{\mathbf{y}\mid \mathbf{x},\mathbf{y}\ \mbox{comparable}\}$ and $C(\mathbf{x})=\{\mathbf{y}\mid \mathbf{x},\mathbf{y}\ \mbox{g-comonotone}\}$. For a point $\mathbf{x}=(x_1,x_2)\in [0,1]^2$ with $x_1>x_2$, these sets are depicted in Figure \ref{fig2}.   
\begin{figure}
\begin{center}
\includegraphics[scale=0.6]{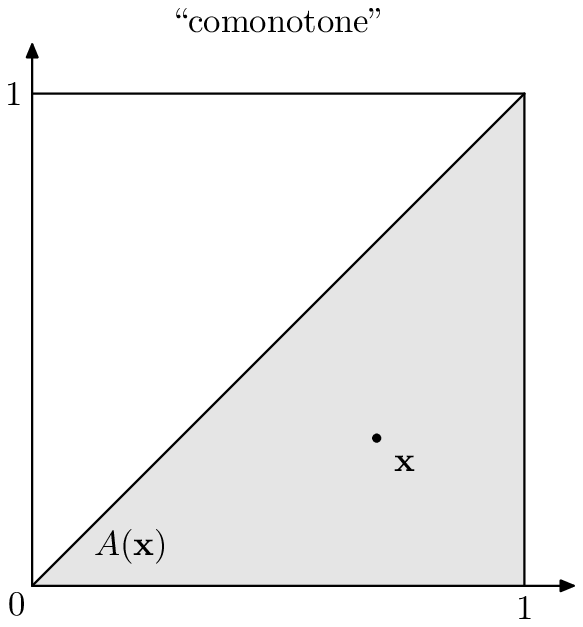}
\includegraphics[scale=0.6]{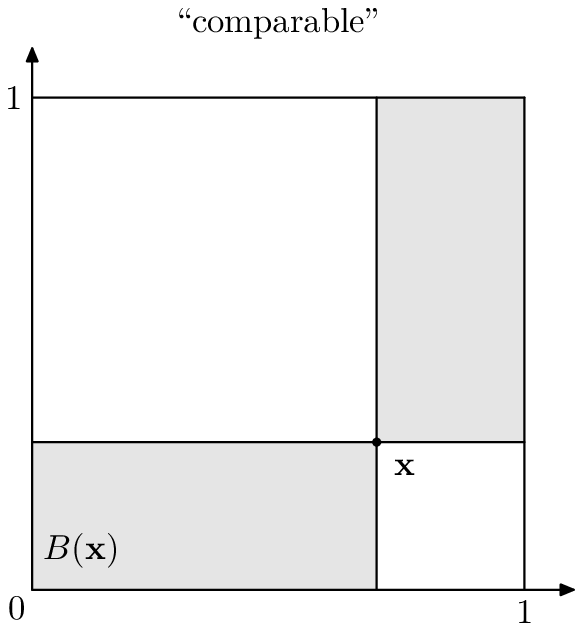}
\includegraphics[scale=0.6]{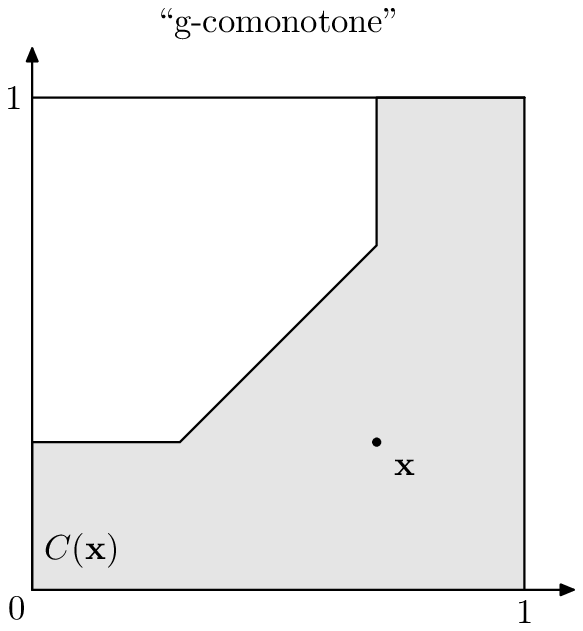}
\end{center}
\caption{The sets $A(\mathbf{x})$, $B(\mathbf{x})$ and $C(\mathbf{x})$ for a given vector $\mathbf{x}\in[0,1]^2$.}
\label{fig2}
\end{figure} 
As the figure indicates, for this particular point $\mathbf{x}$, the set $C(\mathbf{x})$ is the union of the sets $A(\mathbf{x})$ and $B(\mathbf{x})$. 

We show that this is valid in general, i.e., $C(\mathbf{x})=A(\mathbf{x})\cup B(\mathbf{x})$ for all $\mathbf{x}\in [0,1]^2$.
To observe this, let $\mathbf{x}=(x_1,x_2)$ be such that $x_1>x_2$ and $\mathbf{y}=(y_1,y_2)$ be arbitrary. Assume that $\mathbf{y}\notin A(\mathbf{x})\cup B(\mathbf{x})$. Then necessarily $y_1<y_2$ (since $\mathbf{x}$, $\mathbf{y}$ are not comonotone), and $ y_1<x_1$, $y_2>x_2$ as $\mathbf{x}$, $\mathbf{y}$ are incomparable. 

However, from this we obtain 
$$(x_1\vee y_1)\wedge (x_2\vee y_2)=x_1\wedge y_2 > x_2 \vee y_1= (x_1\wedge x_2)\vee (y_1\wedge y_2),$$ i.e., $\mathbf{x}$, $\mathbf{y}$ are not g-comonotone. Hence $C(\mathbf{x})\subseteq A(\mathbf{x})\cup B(\mathbf{x})$ and this inclusion can be also proved for $\mathbf{x}$ satisfying $x_1<x_2$. Note that $A(\mathbf{x})=[0,1]^2$ provided $\mathbf{x}=(x,x)$ for some $x\in [0,1]$. Since the opposite inclusion $C(\mathbf{x})\supseteq A(\mathbf{x})\cup B(\mathbf{x})$ follows from Lemma \ref{lem31}, we obtain $C(\mathbf{x})=A(\mathbf{x})\cup B(\mathbf{x})$ for all $\mathbf{x}\in[0,1]^2$.
\end{example}

Let us remark that the above equality $C(\mathbf{x})=A(\mathbf{x})\cup B(\mathbf{x})$ does not hold in higher dimensions. 

\begin{example}
Consider $L=[0,1]^3$ and the vectors $\mathbf{x}=(0.6,0.3,0.5)$ and $\mathbf{y}=(0.7,0.2,0.9)$. It can be easily seen that they are incomparable as well as they are not comonotone. On the other hand, they fulfill equalities
$$ (x_1\vee y_1)\wedge (x_2\vee y_2)=0.7\ \wedge\ 0.3 =0.3= 0.3\ \vee\ 0.2= (x_1\wedge x_2)\vee (y_1\wedge y_2),$$
$$ (x_1\vee y_1)\wedge (x_3\vee y_3)=0.7\ \wedge\ 0.9 =0.7= 0.5\ \vee\ 0.7= (x_1\wedge x_3)\vee (y_1\wedge y_3),$$
$$ (x_2\vee y_2)\wedge (x_3\vee y_3)=0.3\ \wedge\ 0.9 =0.3= 0.3\ \vee\ 0.2= (x_2\wedge x_3)\vee (y_2\wedge y_3),$$
showing that they are g-comonotone. 
\end{example}

Observe that the generalized comonotonicity in $[0,1]^n$, $n\geq 2$ can be characterized as follows:  
by definition, two vectors $\mathbf{x}$ and $\mathbf{y}$ are g-comonotone if and only if for any $i, j \in\{1,\dots,n\}$ the pairs $(x_i,x_j)$ and $(y_i,y_j)$ satisfy \eqref{gcom_def}. 
However, given fixed $i,j\in\{1,\dots,n\}$, according to Example \ref{ex1} the equation \eqref{gcom_def} is valid if and only if 
$(y_i,y_j)\in A(x_i,x_j)\cup B(x_i,x_j)$, i.e., when the pairs $(x_i,x_j)$ and $(y_i,y_j)$ are comparable or $(x_i,x_j)$ and $(y_i,y_j)$ are comonotone.

Coming back to our example, note that the vectors $(x_1,x_2)$ and $(y_1,y_2)$ are comonotone, the vectors $(x_1,x_3)$ and $(y_1,y_3)$ are comparable, while
the vectors $(x_2,x_3)$ and $(y_2,y_3)$ are comonotone.

For an arbitrary lattice $L$, the notions of g-comonotonicity and dual g-comonotonicity need not be equivalent. However, in what follows we show that this will be the case when considering the distributive lattices.

\begin{theorem}\label{thm1}
On any distributive lattice $L$, generalized comonotonicity is self-dual, i.e., it is equivalent 
to dual generalized comonotonicity.
\end{theorem}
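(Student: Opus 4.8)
The plan is to reduce the asserted equivalence to a statement about four elements and then settle it by a short distributive computation. Both conditions \eqref{gcom_def} and \eqref{dgcom_def} are quantified over all pairs $i,j$, and for a fixed pair each condition involves only the four elements $x_i,x_j,y_i,y_j$. Hence it suffices to prove, for arbitrary $a,b,c,d\in L$ (standing for $x_i,x_j,y_i,y_j$), that the identity $(a\vee c)\wedge(b\vee d)=(a\wedge b)\vee(c\wedge d)$ is equivalent to $(a\wedge c)\vee(b\wedge d)=(a\vee b)\wedge(c\vee d)$; applying this to every pair then yields the theorem.

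First I would expand the compound sides by distributivity. Since $(a\vee c)\wedge(b\vee d)=(a\wedge b)\vee(a\wedge d)\vee(b\wedge c)\vee(c\wedge d)$ and the right-hand side $(a\wedge b)\vee(c\wedge d)$ always lies below this expansion, the first identity holds if and only if the two ``cross'' terms are absorbed, that is, if and only if
\[
(a\wedge d)\vee(b\wedge c)\leq(a\wedge b)\vee(c\wedge d). \qquad (\star)
\]
An entirely analogous expansion of $(a\vee b)\wedge(c\vee d)$ shows that the second identity holds if and only if
\[
(a\wedge d)\vee(b\wedge c)\leq(a\wedge c)\vee(b\wedge d). \qquad (\star\star)
\]
Thus the theorem reduces to proving that $(\star)$ and $(\star\star)$ are equivalent in every distributive lattice.

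To obtain $(\star)\Rightarrow(\star\star)$ I would use an idempotent rewriting. From $(\star)$ we have in particular $b\wedge c\leq(a\wedge b)\vee(c\wedge d)$; meeting both sides with $b\wedge c$ and distributing gives $b\wedge c=(a\wedge b\wedge c)\vee(b\wedge c\wedge d)$. Because $a\wedge b\wedge c\leq a\wedge c$ and $b\wedge c\wedge d\leq b\wedge d$, this yields $b\wedge c\leq(a\wedge c)\vee(b\wedge d)$. The same manipulation applied to $a\wedge d$ gives $a\wedge d=(a\wedge b\wedge d)\vee(a\wedge c\wedge d)\leq(b\wedge d)\vee(a\wedge c)$, and joining the two bounds produces exactly $(\star\star)$. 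The converse $(\star\star)\Rightarrow(\star)$ follows from the very same computation, now meeting the relevant terms with the right-hand side of $(\star\star)$ and estimating $a\wedge b\wedge c\leq a\wedge b$, $b\wedge c\wedge d\leq c\wedge d$, and symmetrically for $a\wedge d$.

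I expect the one genuinely load-bearing step to be the reduction in the second paragraph together with the rewriting $b\wedge c=(a\wedge b\wedge c)\vee(b\wedge c\wedge d)$: recognizing that each interchange identity is merely the absorption of its cross terms, and that distributivity lets those cross terms be re-expressed and then re-estimated against the opposite diagonal. Once this is seen, the remaining manipulations are routine, and distributivity is invoked at precisely these two places — which also explains why the equivalence may fail on a non-distributive $L$.
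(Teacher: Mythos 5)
Your proof is correct, and after its first step it follows a genuinely different route from the paper's. Both arguments begin identically: expand $(x_i\vee y_i)\wedge(x_j\vee y_j)$ by distributivity and observe that \eqref{gcom_def} holds exactly when the two cross terms are absorbed, i.e. $(x_i\wedge y_j)\vee(x_j\wedge y_i)\leq(x_i\wedge x_j)\vee(y_i\wedge y_j)$ --- this is your $(\star)$, and it appears verbatim in the paper. From there the paper proceeds asymmetrically: it expands the right-hand side of \eqref{dgcom_def}, substitutes the absorption inequality into that expansion, derives the intermediate bound $(x_i\vee x_j)\wedge(y_i\vee y_j)\leq(x_i\vee y_j)\wedge(x_j\vee y_i)$ via absorption laws, and then converts this back into \eqref{dgcom_def} using the dual distributive expansion of $(x_i\wedge y_i)\vee(x_j\wedge y_j)$ as a meet of four joins; the converse implication is not carried out but dismissed with ``dual arguments.'' You instead make the reduction symmetric: you characterize \emph{both} identities as absorption statements, noting the key structural fact (absent from the paper) that the two identities have the \emph{same} cross terms $(a\wedge d)\vee(b\wedge c)$, merely absorbed by the two different diagonals; the theorem then becomes the equivalence $(\star)\Leftrightarrow(\star\star)$, which you settle by the self-meet rewriting $b\wedge c=(a\wedge b\wedge c)\vee(b\wedge c\wedge d)$ followed by re-estimation against the opposite diagonal. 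What your formulation buys is that the two directions of the equivalence really are the same computation (the decomposition of $b\wedge c$ obtained by meeting with either right-hand side is identical), so no appeal to duality is needed; what the paper's route buys is that it never needs the second absorption characterization, going directly from $(\star)$ to \eqref{dgcom_def} in one chain of inequalities. Both proofs invoke distributivity at exactly the two analogous places, so each correctly locates where the hypothesis on $L$ is used.
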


\begin{proof}
Assume that $\mathbf{x},\mathbf{y}\in L^n$ are g-comonotone and $\{i,j\}\subseteq \{1,\dots,n\}$ be a pair of indexes. Then 
$$(x_i\vee y_i)\wedge (x_j\vee y_j)=(x_i\wedge x_j)\vee (y_i\wedge y_j).$$
Applying distributivity of $L$, we obtain 
$$ (x_i\wedge x_j)\vee(y_i\wedge x_j)\vee(x_i\wedge y_j)\vee(y_i\wedge y_j)=(x_i\wedge x_j)\vee (y_i\wedge y_j),$$
which is equivalent to 
$$(x_i\wedge y_j)\vee (y_i\wedge x_j)\leq (x_i\wedge x_j)\vee (y_i\wedge y_j).$$
Note that the equivalence follows from the fact that in any lattice $M$, for $a,b\in M$ we have $a\vee b = b$ if and only if $a\leq b$. In our case $a=(x_i\wedge y_j)\vee (y_i\wedge x_j)$ and $b=(x_i\wedge x_j)\vee (y_i\wedge y_j)$.

Now, applying distributivity once more and the above inequality for the right-hand side of the dual g-comonotonicity (\ref{dgcom_def}) we obtain
$(x_i\vee x_j)\wedge (y_i\vee y_j)=(x_i\wedge y_i)\vee (x_i\wedge y_j)\vee (x_j\wedge y_i)\vee (x_j\wedge y_j)\leq 
(x_i\wedge y_i)\vee (x_i\wedge x_j)\vee (y_i\wedge y_j)\vee (x_j\wedge y_j).$
Clearly, $(x_i\wedge y_i)\vee (x_i\wedge x_j)=x_i\wedge (y_i\vee x_j)\leq x_i$, $(y_i\wedge y_j)\vee (x_j\wedge y_j)=(y_i\vee x_j)\wedge y_j\leq y_j$, hence 
$(x_i\vee x_j)\wedge (y_i\vee y_j)\leq x_i\vee y_j.$

Similarly, $(x_i\wedge y_i)\vee (y_i\wedge y_j)\leq y_i$ and $(x_i\wedge x_j)\vee (x_j\wedge y_j)\leq x_j$, i.e. 
$(x_i\vee x_j)\wedge (y_i\vee y_j)\leq x_j\vee y_i$ and we have 
$$ (x_i\vee x_j)\wedge (y_i\vee y_j)\leq (x_j\vee y_i)\wedge (x_i\vee y_j)$$
or equivalently
$$ (x_i\vee x_j)\wedge (y_i\vee y_j)\wedge (x_j\vee y_i)\wedge (x_i\vee y_j)= (x_i\vee x_j)\wedge (y_i\vee y_j).$$
By distributivity of $L$, this is equivalent to
$$ (x_i\wedge y_i)\vee (x_j\wedge y_j)=(x_i\vee x_j)\wedge (y_i\vee y_j),$$
which shows that $\mathbf{x}$ and $\mathbf{y}$ are dually g-comonotone. 

The converse implication can be done in a similar way by using dual arguments. 
\qed
\end{proof}

Before we prove an important lemma concerning g-comonotonicity, or equivalently dual g-comonotonicity in distributive lattices, we recall the following well-known fact, cf. \cite{G1}.

\begin{remark}
Let $L$ be a distributive lattice and let $(\lambda_{i,0})_{i\in I}$, $(\lambda_{i,1})_{i\in I}$, $I\neq \emptyset$ finite, be two families of elements of $L$. 
Then 
\begin{equation}\label{eq_d}
\bigwedge_{i\in I}(\lambda_{i,0}\vee \lambda_{i,1})=\bigvee_{\varphi\in \{0,1\}^I}\bigwedge_{i\in I} \lambda_{i,\varphi(i)}
\end{equation}
and dually
\begin{equation}\label{eq_d1}
\bigvee_{i\in I}(\lambda_{i,0}\wedge \lambda_{i,1})=\bigwedge_{\varphi\in \{0,1\}^I}\bigvee_{i\in I} \lambda_{i,\varphi(i)},
\end{equation}
where $\{0,1\}^I=\big\{\varphi\mid \varphi\colon I\to\{0,1\}\big\}$ denotes the set of all functions with domain $I$ and values in $\{0,1\}$. 
\end{remark}

\begin{lemma}\label{lem1}
Let $L$ be a distributive lattice and $\mathbf{x},\mathbf{y}\in L^n$ be two $n$-ary vectors. 
Then  $\mathbf{x}$ and $\mathbf{y}$ are g-comonotone if and only if 
\begin{equation}\label{gcom}
\bigwedge_{i\in I} (x_i\vee y_i)=\bigwedge_{i\in I}x_i \ \vee\ \bigwedge_{i\in I}y_i
\end{equation}
for any non-empty subset $I\subseteq \{1,\dots, n\}$.

Similarly, $\mathbf{x}$ and $\mathbf{y}$ are dually g-comonotone if and only if
\begin{equation}\label{dgcom}
\bigvee_{i\in I} (x_i\wedge y_i)=\bigvee_{i\in I}x_i \ \wedge\ \bigvee_{i\in I}y_i
\end{equation}
for any non-empty subset $I\subseteq \{1,\dots, n\}$.
\end{lemma}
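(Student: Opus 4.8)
The plan is to prove both equivalences by first disposing of the trivial directions and then attacking the forward direction of the g-comonotone equivalence \eqref{gcom} by induction on $|I|$; the dual equivalence \eqref{dgcom} will follow by the order-dual argument together with the self-duality established in Theorem \ref{thm1}. The ``if'' direction of the first claim is immediate: specializing \eqref{gcom} to a two-element set $I=\{i,j\}$ recovers exactly the defining identity \eqref{gcom_def}, so $\mathbf{x},\mathbf{y}$ are g-comonotone. For the ``only if'' direction I fix g-comonotone $\mathbf{x},\mathbf{y}$ and prove \eqref{gcom} for every nonempty $I$ by induction on $|I|$. Since $\bigwedge_{i\in I}x_i$ and $\bigwedge_{i\in I}y_i$ both lie below $\bigwedge_{i\in I}(x_i\vee y_i)$, the inequality ``$\geq$'' is automatic and only ``$\leq$'' needs work; the cases $|I|=1$ (trivial) and $|I|=2$ (which is precisely \eqref{gcom_def}) provide the base.

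For $|I|=k\geq 3$ I single out an index $m\in I$, set $J=I\setminus\{m\}$, and write $a=\bigwedge_{i\in J}x_i$, $b=\bigwedge_{i\in J}y_i$. The inductive hypothesis applied to $J$ gives $\bigwedge_{i\in J}(x_i\vee y_i)=a\vee b$, so that $\bigwedge_{i\in I}(x_i\vee y_i)=(a\vee b)\wedge(x_m\vee y_m)$. Expanding by distributivity and noting $(a\wedge x_m)\vee(b\wedge y_m)=\bigwedge_{i\in I}x_i\vee\bigwedge_{i\in I}y_i$, the entire step collapses to a single inequality,
\[
(a\wedge y_m)\vee(b\wedge x_m)\leq(a\wedge x_m)\vee(b\wedge y_m),
\]
which I regard as the heart of the proof and the main obstacle. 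The difficulty is that $a$ and $b$ are meets over $J$, so applying pairwise g-comonotonicity index by index—in its equivalent form $(x_i\wedge y_m)\vee(y_i\wedge x_m)\leq(x_i\wedge x_m)\vee(y_i\wedge y_m)$, which falls out of \eqref{gcom_def} by distributivity exactly as in the proof of Theorem \ref{thm1}—produces bounds pointing the wrong way.

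To circumvent this I would first rewrite the left-hand side, using idempotency of $\wedge$ and the distributive law $\big(\bigwedge_i u_i\big)\vee\big(\bigwedge_j v_j\big)=\bigwedge_{i,j}(u_i\vee v_j)$, as the double meet $\bigwedge_{i,j\in J}\big[(x_i\wedge y_m)\vee(y_j\wedge x_m)\big]$, and then discard all off-diagonal terms to bound it above by the diagonal meet $\bigwedge_{i\in J}\big[(x_i\wedge y_m)\vee(y_i\wedge x_m)\big]$. Applying the pairwise identity termwise, this is in turn at most $\bigwedge_{i\in J}\big[(x_i\wedge x_m)\vee(y_i\wedge y_m)\big]$. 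It remains to bound this last meet by $(a\wedge x_m)\vee(b\wedge y_m)$.

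For the final step I expand $\bigwedge_{i\in J}\big[(x_i\wedge x_m)\vee(y_i\wedge y_m)\big]$ via the distributive formula \eqref{eq_d}. Its two ``pure'' terms are exactly $a\wedge x_m$ and $b\wedge y_m$, while every ``mixed'' term has the form $\big(\bigwedge_{i\in I_0}x_i\wedge\bigwedge_{i\in I_1}y_i\big)\wedge x_m\wedge y_m$ for a partition $J=I_0\sqcup I_1$ into nonempty blocks. Here the inductive hypothesis enters a second time: applied to $J$ it gives $\bigwedge_{i\in I_0}x_i\wedge\bigwedge_{i\in I_1}y_i\leq a\vee b$ (precisely the content of \eqref{gcom} for $J$, read off from its expansion), so each mixed term is $\leq(a\vee b)\wedge x_m\wedge y_m\leq(a\wedge x_m)\vee(b\wedge y_m)$. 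Hence the whole join is $\leq(a\wedge x_m)\vee(b\wedge y_m)$, establishing the key inequality and closing the induction. The dual equivalence \eqref{dgcom} I would then obtain either by running the order-dual of the entire argument with the roles of $\wedge,\vee$ interchanged and \eqref{eq_d1} in place of \eqref{eq_d}, or, more economically, by invoking the self-duality of Theorem \ref{thm1} to transport the already-proven statement.
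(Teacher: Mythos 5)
Your proof is correct, but its induction step takes a genuinely different route from the paper's. The paper also inducts on $|I|$, but symmetrically: it writes $\bigwedge_{i\in I}(x_i\vee y_i)=\bigwedge_{i\in I}\bigwedge_{j\in I\smallsetminus\{i\}}(x_j\vee y_j)$, applies the inductive hypothesis to \emph{every} $(|I|-1)$-element subset $I\smallsetminus\{i\}$ at once, expands via \eqref{eq_d} with $\lambda_{i,0}=\bigwedge_{j\in I\smallsetminus\{i\}}x_j$ and $\lambda_{i,1}=\bigwedge_{j\in I\smallsetminus\{i\}}y_j$, and then absorbs every non-constant $\varphi$ by a pigeonhole observation: since $|I|\geq 3$, two indices $i_1,i_2$ receive the same value of $\varphi$, and already $\lambda_{i_1,0}\wedge\lambda_{i_2,0}=\bigwedge_{i\in I}x_i$ (resp.\ $\lambda_{i_1,1}\wedge\lambda_{i_2,1}=\bigwedge_{i\in I}y_i$). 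Thus in the paper the pairwise identity \eqref{gcom_def} enters only through the base case $|I|=2$, and the inductive step is pure distributivity. You instead peel off a single index $m$, use the hypothesis on $J=I\smallsetminus\{m\}$ to reduce everything to the exchange inequality $(a\wedge y_m)\vee(b\wedge x_m)\leq(a\wedge x_m)\vee(b\wedge y_m)$, and prove it by combining the pairwise consequence $(x_i\wedge y_m)\vee(y_i\wedge x_m)\leq(x_i\wedge x_m)\vee(y_i\wedge y_m)$ of \eqref{gcom_def} (the same inequality extracted in the proof of Theorem \ref{thm1}) with a second use of the inductive hypothesis to dominate the mixed terms of the \eqref{eq_d}-expansion. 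I verified your chain (double-meet rewriting, restriction to the diagonal, termwise pairwise bound, and the bound $(a\vee b)\wedge x_m\wedge y_m\leq(a\wedge x_m)\vee(b\wedge y_m)$ on mixed terms); every step is valid in a distributive lattice. Your route is longer and invokes the hypothesis twice, but it makes explicit where and why g-comonotonicity is needed and ties the lemma to Theorem \ref{thm1}; the paper's route is shorter and makes the inductive step independent of the pairwise condition.

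One caution on the dual half: your ``more economical'' alternative, invoking Theorem \ref{thm1}, does not suffice by itself. Theorem \ref{thm1} converts dual g-comonotonicity into g-comonotonicity, hence (by the half already proved) into \eqref{gcom} for all $I$, but it provides no passage to \eqref{dgcom} for all $I$. What does work economically is the lattice duality principle: apply the proven half to the order-dual lattice $L^{\partial}$, which is still distributive. This is in substance your first alternative --- rerunning the argument with $\wedge$ and $\vee$ interchanged and \eqref{eq_d1} in place of \eqref{eq_d} --- and it is also what the paper does, so this is a minor slip rather than a gap.
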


\begin{proof}
We prove the first equivalence. The second one can be proved using the dual arguments.

Obviously, \eqref{gcom} applied to a two-element subset $I=\{i,j\}$ yields \eqref{gcom_def}, i.e., two vectors $\mathbf{x}$, $\mathbf{y}$ are g-comonotone, provided they satisfy \eqref{gcom}.

Conversely, assume that $\mathbf{x}$ and $\mathbf{y}$ are g-comonotone. Then evidently \eqref{gcom} holds for $I=\emptyset$ as well as for any one or two-element subset $I\subseteq \{1,\dots,n\}$. Thus, assume further that $I\subseteq \{1,\dots,n\}$ is an arbitrary subset, $\left|I\right|=m$ where $3\leq m\leq n$ and that \eqref{gcom} is valid for any subset $J\subseteq \{1,\dots,n\}$ with $\left|J\right|=m-1$. Then with respect to the induction hypothesis, we obtain
$$ \bigwedge_{i\in I} (x_i\vee y_i)=\bigwedge_{i\in I}\bigwedge_{j\in I\smallsetminus\{i\}}\hspace{-0.3cm}(x_j\vee y_j)=\bigwedge_{i\in I}\big( \bigwedge_{j\in I\smallsetminus\{i\}}\hspace{-0.3cm}x_j\;\; \vee \bigwedge_{j\in I\smallsetminus\{i\}}\hspace{-0.3cm}y_j\big).$$

For $i\in I$ put $\displaystyle \lambda_{i,0}=\hspace{-0.3cm}\bigwedge_{j\in I\smallsetminus\{i\}}\hspace{-0.3cm}x_j$ and $\displaystyle \lambda_{i,1}=\hspace{-0.3cm}\bigwedge_{j\in I\smallsetminus\{i\}}\hspace{-0.3cm}y_j$. According to \eqref{eq_d} we have
$$ \bigwedge_{i\in I} (x_i\vee y_i)=\bigwedge_{i\in I}(\lambda_{i,0}\vee \lambda_{i,1})=\bigvee_{\varphi\in \{0,1\}^I}\bigwedge_{i\in I} \lambda_{i,\varphi(i)}.$$
For the constant functions $\varphi_0,\varphi_1\colon I\to \{0,1\}$ such that $\varphi_0(i)=0$ and $\varphi_1(i)=1$ for all $i\in I$ we have
$$ \bigwedge_{i\in I} \lambda_{i,\varphi_0(i)}=\bigwedge_{i\in I}\bigwedge_{j\in I\smallsetminus\{i\}}\hspace{-0.3cm}x_j=\bigwedge_{i\in I}x_i \quad \mbox{and}\quad 
\bigwedge_{i\in I} \lambda_{i,\varphi_1(i)}=\bigwedge_{i\in I}\bigwedge_{j\in I\smallsetminus\{i\}}\hspace{-0.3cm}y_j=\bigwedge_{i\in I}y_i. $$
If $\varphi\colon I\to \{0,1\}$ is non-constant, then $\varphi^{-1}(0)$ or $\varphi^{-1}(1)$ contains at least two elements since $\left|I\right|\geq 3$. Suppose that $\{i_1,i_2\}\subseteq \varphi^{-1}(0)$. Then 
$$ \bigwedge_{i\in I} \lambda_{i,\varphi(i)}\leq \lambda_{i_1,\varphi(i_1)}\wedge \lambda_{i_2,\varphi(i_2)}=\lambda_{i_1,0}\wedge \lambda_{i_2,0}=\bigwedge_{j\in I\smallsetminus\{i_1\}}\hspace{-0.3cm}x_j\ \wedge\ \bigwedge_{j\in I\smallsetminus\{i_2\}}\hspace{-0.3cm}x_j=\bigwedge_{i\in I}x_i.$$
Similarly, 
$$ \bigwedge_{i\in I} \lambda_{i,\varphi(i)}\leq \bigwedge_{i\in I}y_i,$$ 
provided $\{i_1,i_2\}\subseteq \varphi^{-1}(1)$. Consequently, we obtain 
$$ \bigwedge_{i\in I} (x_i\vee y_i)=\bigvee_{\varphi\in \{0,1\}^I}\bigwedge_{i\in I} \lambda_{i,\varphi(i)}=\bigwedge_{i\in I}x_i\ \vee\ \bigwedge_{i\in I}y_i \ \vee\ \bigvee_{\substack{\varphi\in \{0,1\}^I \\ \varphi\neq\varphi_0,\varphi_1}}\bigwedge_{i\in I} \lambda_{i,\varphi(i)}=\bigwedge_{i\in I}x_i\ \vee\ \bigwedge_{i\in I}y_i,$$
which shows that \eqref{gcom} also holds for the subset $I$.
\qed
\end{proof}

As a consequence of the previous two assertions we obtain the following theorem.

\begin{theorem}\label{thm2}
Let $L$ be a distributive lattice and $\mathbf{x}, \mathbf{y}\in L^n$ be two $n$-ary vectors. The following conditions are equivalent:
\begin{enumerate}
\item[\rm(i)] The vectors $\mathbf{x}$ and $\mathbf{y}$ are g-comonotone.
\item[\rm(ii)] The vectors $\mathbf{x}$ and $\mathbf{y}$ are dually g-comonotone.
\item[\rm(iii)] $\displaystyle \bigwedge_{i\in I} (x_i\vee y_i)=\bigwedge_{i\in I}x_i \ \vee\ \bigwedge_{i\in I}y_i$ holds for each subset $I\subseteq \{1,\dots, n\}$.
\item[\rm(iv)] $\displaystyle \bigvee_{i\in I} (x_i\wedge y_i)=\bigvee_{i\in I}x_i \ \wedge\ \bigvee_{i\in I}y_i$ holds for each subset $I\subseteq \{1,\dots, n\}$.
\end{enumerate}
\end{theorem}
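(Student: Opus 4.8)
The plan is to deduce the four-way equivalence directly from the two preceding results, Theorem \ref{thm1} and Lemma \ref{lem1}, rather than to prove any of the individual implications from scratch. Since all four conditions are asserted to be mutually equivalent, it suffices to link them by a connected pattern of equivalences, and I would organize these as (iii) $\Leftrightarrow$ (i) $\Leftrightarrow$ (ii) $\Leftrightarrow$ (iv). Concretely, I would first invoke Theorem \ref{thm1}, which states that on a distributive lattice generalized comonotonicity coincides with its dual; this supplies (i) $\Leftrightarrow$ (ii) at once.

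Next I would apply Lemma \ref{lem1}. Its first half characterizes g-comonotonicity of $\mathbf{x}$ and $\mathbf{y}$ by the identity $\bigwedge_{i\in I}(x_i\vee y_i)=\bigwedge_{i\in I}x_i\vee\bigwedge_{i\in I}y_i$ holding for every non-empty $I$, which is exactly (i) $\Leftrightarrow$ (iii); its second half characterizes dual g-comonotonicity by the dual identity, giving (ii) $\Leftrightarrow$ (iv) in the same way. Chaining these three equivalences with the one from Theorem \ref{thm1} ties each of (ii), (iii), (iv) to (i), so no condition is left isolated and the equivalence is established.

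The only point that deserves an explicit remark is the mismatch between the quantifier ``for each subset $I$'' appearing in (iii)--(iv) and the ``non-empty subset'' used in Lemma \ref{lem1}. I would dispose of this by observing that the case $I=\emptyset$ is vacuously true under the conventions $\bigwedge_{i\in\emptyset}=1$ and $\bigvee_{i\in\emptyset}=0$: in (iii) both sides equal $1$, and in (iv) both sides equal $0$. Hence adjoining $\emptyset$ to the range of $I$ changes nothing and the lemma transfers verbatim. I do not anticipate any genuine obstacle, since the substantive work---the induction on $\left|I\right|$ in Lemma \ref{lem1} and the self-duality argument in Theorem \ref{thm1}---has already been done; the theorem is essentially a bookkeeping corollary that assembles them, and the only care needed is to keep the chain of equivalences connected.
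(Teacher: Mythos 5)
Your proposal is correct and follows essentially the same route as the paper: the authors likewise obtain (i) $\Leftrightarrow$ (ii) from Theorem \ref{thm1} and (i) $\Leftrightarrow$ (iii), (ii) $\Leftrightarrow$ (iv) from Lemma \ref{lem1}. Your explicit handling of the $I=\emptyset$ case (both sides trivially equal under the empty meet/join conventions) is a small point the paper leaves implicit, and it is handled correctly.
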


\begin{proof}
According to Theorem \ref{thm1} the conditions (i) and (ii) are equivalent, while (i) if and only if (iii) and (ii) if and only if (iv) follow from Lemma \ref{lem1}. 
\qed
\end{proof}

\section{Alternative characterizations of Sugeno integrals on bounded distributive lattices}\label{sec:4}

The goal of this section is to study some of the characteristic properties of $L$-valued Sugeno integrals, mainly with respect to the defined concept of g-comonotonicity.

\begin{lemma}\label{lem2}
Let $L$ be a bounded distributive lattice and $m\colon 2^{[n]}\to L$ be an $L$-valued capacity. Then the discrete Sugeno integral $\mathsf{Su}_m$ is inf-homogeneous and g-comonotone supremal as well as sup-homogeneous and g-comonotone infimal.
\end{lemma}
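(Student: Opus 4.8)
The plan is to establish the four properties by working directly from the two equivalent expressions for $\mathsf{Su}_m$ in \eqref{Sugeno_def}, combined with distributivity of $L$ and monotonicity of the Sugeno integral. The join-of-meets form is tailored to inf-homogeneity and g-comonotone supremality, while its meet-of-joins dual handles sup-homogeneity and g-comonotone infimality. Because the two forms are formally dual and g-comonotonicity is self-dual on distributive lattices by Theorem \ref{thm1}, I would carry out two of the arguments in full and obtain the other two by dualization.

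For inf-homogeneity I would start from $\mathsf{Su}_m(\mathbf{c}\wedge\mathbf{x})=\bigvee_{I}\big(m(I)\wedge\bigwedge_{i\in I}(c\wedge x_i)\big)$. For nonempty $I$ the inner meet factors as $c\wedge\bigwedge_{i\in I}x_i$, whereas the $I=\emptyset$ term contributes $m(\emptyset)\wedge 1=0$ and may be dropped; pulling $c$ out of the supremum by distributivity then gives $c\wedge\mathsf{Su}_m(\mathbf{x})$. Sup-homogeneity is the exact dual computation on the meet-of-joins form, where the discarded term is now the $I=\emptyset$ one, contributing $m([n])\vee 0=1$.

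The heart of the matter is g-comonotone supremality. Monotonicity of $\mathsf{Su}_m$ gives $\mathsf{Su}_m(\mathbf{x}\vee\mathbf{y})\geq\mathsf{Su}_m(\mathbf{x})\vee\mathsf{Su}_m(\mathbf{y})$ at once, so only the reverse inequality is at issue. Expanding $\mathsf{Su}_m(\mathbf{x}\vee\mathbf{y})$ in the join-of-meets form produces the inner term $\bigwedge_{i\in I}(x_i\vee y_i)$, and the step I expect to be the real obstacle is that the pairwise condition \eqref{gcom_def} does not by itself allow this meet to split over an arbitrary index set $I$. What is needed is the full identity $\bigwedge_{i\in I}(x_i\vee y_i)=\bigwedge_{i\in I}x_i\vee\bigwedge_{i\in I}y_i$ for every subset $I$, which is precisely the content of Lemma \ref{lem1}. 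Once this identity is invoked, distributing $m(I)$ across the join and regrouping the outer supremum into its $x$-part and $y$-part yields $\mathsf{Su}_m(\mathbf{x})\vee\mathsf{Su}_m(\mathbf{y})$.

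Finally, g-comonotone infimality follows by the dual computation on the meet-of-joins form, now using the dual identity \eqref{dgcom}, equivalently condition (iv) of Theorem \ref{thm2}, which applies since g-comonotonicity and dual g-comonotonicity coincide on distributive lattices. Monotonicity supplies $\mathsf{Su}_m(\mathbf{x}\wedge\mathbf{y})\leq\mathsf{Su}_m(\mathbf{x})\wedge\mathsf{Su}_m(\mathbf{y})$, and distributing the outer join over the split inner join gives the reverse inequality. The only routine care required throughout is the bookkeeping of the $I=\emptyset$ term together with the boundary values $m(\emptyset)=0$ and $m([n])=1$.
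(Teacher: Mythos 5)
Your proposal is correct and follows essentially the same route as the paper's own proof: both rest on the two equivalent expressions in \eqref{Sugeno_def}, distributivity of $L$, and the crucial splitting identity $\bigwedge_{i\in I}(x_i\vee y_i)=\bigwedge_{i\in I}x_i\vee\bigwedge_{i\in I}y_i$ for arbitrary $I$ from Lemma \ref{lem1} (condition (iii) of Theorem \ref{thm2}), with the remaining two properties obtained by dualization. Your explicit appeal to Theorem \ref{thm1} to justify using the dual identity (iv) for the infimal case, and your bookkeeping of the $I=\emptyset$ terms, only make explicit what the paper leaves implicit.
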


\begin{proof}
Let $m\colon 2^{[n]}\to L$ be a capacity given on $[n]$. Recall (see Definition \ref{Sug_def}), that the corresponding Sugeno integral is defined for all $\mathbf{x}=(x_1,\dots,x_n)\in L^n$ by 
$$ \mathsf{Su}_m(\mathbf{x})=\bigvee_{I\subseteq [n]}\big(m(I)\wedge \bigwedge_{i\in I}x_i\big). $$

For an element $c\in L$ and the constant vector $\mathbf{c}=(c,\dots,c)$ we obtain 
$$\mathsf{Su}_m(\mathbf{c}\wedge\mathbf{x})=\bigvee_{I\subseteq [n]}\big(m(I)\wedge \bigwedge_{i\in I}(c\wedge x_i)\big)=\bigvee_{I\subseteq [n]}\big(c \wedge m(I)\wedge \bigwedge_{i\in I}x_i\big).$$  
Since $m(\emptyset)=0$, distributivity of $L$ yields
$$ \mathsf{Su}_m(\mathbf{c}\wedge\mathbf{x})=c \wedge\bigvee_{I\subseteq [n]}\big( m(I)\wedge \bigwedge_{i\in I}x_i\big)= c\wedge \mathsf{Su}_m(\mathbf{x}),$$
i.e., $\mathsf{Su}_m$ is inf-homogeneous.

Further, to show that $\mathsf{Su}_m$ is g-comonotone supremal, let $\mathbf{x},\mathbf{y}\in L^n$ be two generalized comonotone vectors. Then due to (iii) of Theorem \ref{thm2} we have
$$ \mathsf{Su}_m(\mathbf{x}\vee \mathbf{y})=\bigvee_{I\subseteq [n]}\big(m(I)\wedge \bigwedge_{i\in I}(x_i\vee y_i)\big)= \bigvee_{I\subseteq [n]}\Big(m(I)\wedge \big(\bigwedge_{i\in I}x_i\  \vee\ \bigwedge_{i\in I}y_i\big)\Big),$$ 
which is by distributivity equal to
$$ \bigvee_{I\subseteq [n]}\Big( \big(m(I)\wedge \bigwedge_{i\in I}x_i \big) \vee \big(m(I)\wedge \bigwedge_{i\in I}y_i\big) \Big)= 
\bigvee_{I\subseteq [n]}\big(m(I)\wedge \bigwedge_{i\in I}x_i\big) \vee  \bigvee_{I\subseteq [n]}\big(m(I)\wedge \bigwedge_{i\in I}y_i\big).$$
Hence $\mathsf{Su}_m(\mathbf{x}\vee \mathbf{y})=\mathsf{Su}_m(\mathbf{x})\vee \mathsf{Su}_m(\mathbf{y})$, proving that the Sugeno integral $\mathsf{Su}_m$ is g-comonotone supremal.

Using the dual expression for the Sugeno integral, one can show in the same way that $\mathsf{Su}_m$ is also sup-homogeneous and g-comonotone infimal.
\qed
\end{proof}

\begin{remark}\label{rem2}
According to Lemma \ref{lem31}, as comonotone vectors are also g-comonotone, we obtain that any discrete Sugeno integral on a bounded distributive lattice is comonotone supremal and comonotone infimal. Obviously, g-comonotone supremality of a function $f$ represents the stronger condition than comonotone supremality in general, since the equality $f(\mathbf{x}\vee\mathbf{y})=f(\mathbf{x})\vee f(\mathbf{y})$ is required for more pairs of vectors in the former case.
\end{remark}
Before we prove that inf-homogeneity together with comonotone supremality characterize discrete Sugeno integrals, we recall one result from \cite{Couceiro}. 

\begin{proposition}\label{prop2}
An aggregation function $f$ on a bounded distributive lattice $L$ is a discrete Sugeno integral if and only if $f$ is inf-homogeneous and sup-homogeneous.
\end{proposition}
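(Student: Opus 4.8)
The plan is to prove the two implications separately, the forward one being immediate and the converse resting on a two-sided estimate. If $f=\mathsf{Su}_m$ for some capacity $m$, then Lemma \ref{lem2} already yields that $f$ is both inf-homogeneous and sup-homogeneous, so nothing further is needed in this direction.

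For the converse, suppose $f$ is an inf-homogeneous and sup-homogeneous aggregation function. The first step is to reconstruct a candidate capacity directly from $f$: for $I\subseteq [n]$ let $\mathbf{1}_I\in L^n$ be the vector whose $i$-th coordinate equals $1$ for $i\in I$ and $0$ otherwise, and set $m(I)=f(\mathbf{1}_I)$. Since $\mathbf{1}_\emptyset=\mathbf{0}$ and $\mathbf{1}_{[n]}=\mathbf{1}$, the two boundary conditions give $m(\emptyset)=0$ and $m([n])=1$, while $I\subseteq J$ implies $\mathbf{1}_I\leq\mathbf{1}_J$ and hence $m(I)\leq m(J)$ by monotonicity of $f$; thus $m$ is an $L$-valued capacity. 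The remaining goal is to verify $f(\mathbf{x})=\mathsf{Su}_m(\mathbf{x})$ for every $\mathbf{x}\in L^n$, which I would split into two inequalities, each using one homogeneity property together with one of the two equivalent expressions for $\mathsf{Su}_m$ in Definition \ref{Sug_def}.

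To obtain $\mathsf{Su}_m(\mathbf{x})\leq f(\mathbf{x})$, I would fix $I\subseteq [n]$, write $a_I=\bigwedge_{i\in I}x_i$, and consider the test vector $\mathbf{a}_I\wedge\mathbf{1}_I$, which is equal to $a_I$ on $I$ and to $0$ elsewhere; since $a_I\leq x_i$ for $i\in I$, it lies below $\mathbf{x}$, so monotonicity gives $f(\mathbf{a}_I\wedge\mathbf{1}_I)\leq f(\mathbf{x})$, whereas inf-homogeneity gives $f(\mathbf{a}_I\wedge\mathbf{1}_I)=a_I\wedge f(\mathbf{1}_I)=a_I\wedge m(I)$. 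Taking the join over all $I$ reproduces exactly the first form of $\mathsf{Su}_m(\mathbf{x})$. Dually, for $f(\mathbf{x})\leq\mathsf{Su}_m(\mathbf{x})$ I would fix $I$, write $b_I=\bigvee_{i\in I}x_i$, and use the vector $\mathbf{b}_I\vee\mathbf{1}_{[n]\setminus I}$, which equals $1$ on $[n]\setminus I$ and equals $b_I$ on $I$; since $b_I\geq x_i$ for $i\in I$, it dominates $\mathbf{x}$, so monotonicity and sup-homogeneity give $f(\mathbf{x})\leq f(\mathbf{b}_I\vee\mathbf{1}_{[n]\setminus I})=b_I\vee f(\mathbf{1}_{[n]\setminus I})=b_I\vee m([n]\setminus I)$, and taking the meet over all $I$ reproduces the second form of $\mathsf{Su}_m(\mathbf{x})$.

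Because the two expressions in Definition \ref{Sug_def} coincide on the distributive lattice $L$, both bounds refer to the same value $\mathsf{Su}_m(\mathbf{x})$, so combining them forces $f(\mathbf{x})=\mathsf{Su}_m(\mathbf{x})$. I do not anticipate a genuine obstacle: the only real decisions are the two families of test vectors, and these are essentially forced by the demand that each should sandwich $\mathbf{x}$ while factoring through a constant vector so that a homogeneity identity applies. The one point worth emphasizing is the asymmetry of the argument—the lower bound draws on inf-homogeneity and the join form of the integral, the upper bound on sup-homogeneity and the meet form—so that both homogeneity hypotheses are genuinely indispensable.
\qed
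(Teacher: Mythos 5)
Your proof is correct. Note that the paper never proves Proposition \ref{prop2} itself---it is recalled from \cite{Couceiro} without proof---but your argument is precisely the one the paper uses later to establish Theorem \ref{thm4}: the same induced capacity $m(I)=f(\mathbf{1}_I)$, the same sandwiching test vectors $\mathbf{a}_I\wedge\mathbf{1}_I\leq\mathbf{x}\leq\mathbf{b}_I\vee\mathbf{1}_{[n]\setminus I}$, and the same two-sided estimate played against the two equivalent expressions in Definition \ref{Sug_def}. In fact your proof yields slightly more than the stated proposition: you only ever invoke inf- and sup-homogeneity with a Boolean vector ($\mathbf{1}_I$ or $\mathbf{1}_{[n]\setminus I}$) in the variable slot, so the identical argument proves the paper's stronger Theorem \ref{thm4}, where full homogeneity is weakened to Boolean inf- and sup-homogeneity.
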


With respect to this result a natural question can be raised whether comonotone supremality implies sup-homogeneity. If this is the case, then the proposed characterization would be just a simple consequence of the result from \cite{Couceiro}. In what follows we will discuss this question. First, we prove the following simple, but important lemma.

\begin{lemma}\label{lem4}
Let $L$ be a bounded lattice.
If $f\colon L^n \to L$ is an inf-homogeneous or sup-homogeneous aggregation function, then $f$ is idempotent. 
\end{lemma}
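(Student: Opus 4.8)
The plan is to recognize that idempotency---the property $f(c,\dots,c)=c$ for every $c\in L$---follows at once by applying the available homogeneity condition to one of the two extreme constant vectors and then invoking the boundary conditions built into the definition of an aggregation function. So I would split into the two cases named in the hypothesis.

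First I would treat the inf-homogeneous case. The key observation is that an arbitrary constant vector $\mathbf{c}$ can be written as $\mathbf{c}=\mathbf{c}\wedge\mathbf{1}$, where $\mathbf{1}=(1,\dots,1)$. Inf-homogeneity applied to $\mathbf{x}=\mathbf{1}$ then yields $f(\mathbf{c})=f(\mathbf{c}\wedge\mathbf{1})=c\wedge f(\mathbf{1})$, and since $f$ is an aggregation function we have $f(\mathbf{1})=1$, whence $f(\mathbf{c})=c\wedge 1=c$. The sup-homogeneous case is handled dually: write $\mathbf{c}=\mathbf{c}\vee\mathbf{0}$ with $\mathbf{0}=(0,\dots,0)$, apply sup-homogeneity to obtain $f(\mathbf{c})=c\vee f(\mathbf{0})$, and use the boundary condition $f(\mathbf{0})=0$ to conclude $f(\mathbf{c})=c\vee 0=c$.

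I do not expect any genuine obstacle here: the whole argument is a single instantiation of the defining identity in each case. The only point that needs care is pairing the correct extreme vector with the correct operation---$\mathbf{1}$ with the meet and $\mathbf{0}$ with the join---so that exactly the boundary condition that collapses the expression to $c$ becomes available. Note that neither monotonicity of $f$ nor distributivity of $L$ is needed, which is consistent with the lemma being stated for an arbitrary bounded lattice.
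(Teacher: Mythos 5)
Your proof is correct and is essentially identical to the paper's own argument: apply inf-homogeneity to $\mathbf{x}=\mathbf{1}$ (giving $f(\mathbf{c})=c\wedge f(\mathbf{1})=c$) and sup-homogeneity to $\mathbf{x}=\mathbf{0}$ dually. No issues.
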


\begin{proof}
Assume that $f$ is inf-homogeneous. Then  
$$ f(x,\dots,x)= x\wedge f(1,\dots,1)=x\wedge 1=x.$$ 
Dually, if $f$ is sup-homogeneous, we obtain 
$$ f(x,\dots,x)= x\vee f(0,\dots,0)=x\vee 0=x,$$ 
hence in both cases the function $f$ is idempotent. 
\qed
\end{proof}

In what follows we give an example of aggregation function which is comonotone supremal but fails to be sup-homogeneous. 

\begin{example}
Let $L$ be a bounded distributive lattice with at least three elements and $n\geq 1$ an integer. Consider the constant aggregation function $h\colon L^n\to L$ given by $h(0,\dots,0)=0$ and $h(\mathbf{x})=1$ otherwise. It is easily seen that $h$ is a $\vee$-homomorphism, i.e., $h(\mathbf{x}\vee \mathbf{y})=h(\mathbf{x})\vee h(\mathbf{y})$ for all $\mathbf{x},\mathbf{y}\in L^n$, thus it is comonotone supremal. On the other hand, $h$ is not idempotent, therefore it cannot be sup-homogeneous. 
Note that if $L$ is a two element chain, then each aggregation function on $L$ is inf-homogeneous as well as sup-homogeneous. 
Similarly, one can find an aggregation function which is comonotone infimal but fails to be inf-homogeneous.
\end{example}

Let us note that the previous consideration, i.e., the usage of non-surjective $\vee$-homomorphism, can be modified to obtain a relatively rich class of
aggregation functions which are comonotone supremal but not sup-homogeneous. Let $L$ be a bounded distributive lattice, $\left|L\right|\geq 3$. Given an integer $n\geq 1$, consider a function $h\colon L^n\to L$ which is comonotone supremal, e.g., $\mathsf{Su}_m$ represents such function. Further, let $g\colon L\to L$ be a non-surjective $\vee$-homomorphism preserving the bottom and the top element of $L$ respectively. Then the composition $h\circ g\colon L^n\to L$ is also a comonotone supremal aggregation function, which fails to be idempotent (it is not surjective). With respect to Lemma \ref{lem4} the function $h\circ g$ cannot be sup-homogeneous.

The following statement is a simple corollary of Proposition \ref{prop2}.

\begin{corollary}\label{lem5}
Let $L$ be a distributive lattice and $f\colon L^n \to L$ be an inf-homogeneous and g-comonotone supremal aggregation function. Then $f$ is a Sugeno integral. 
\end{corollary}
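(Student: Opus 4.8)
The plan is to deduce the statement directly from Proposition \ref{prop2}, which already characterizes discrete Sugeno integrals on a bounded distributive lattice as exactly those aggregation functions that are simultaneously inf-homogeneous and sup-homogeneous. Since inf-homogeneity is assumed, the entire task reduces to showing that $f$ is also sup-homogeneous, i.e. to verifying the identity $f(\mathbf{c}\vee\mathbf{x})=c\vee f(\mathbf{x})$ for every $\mathbf{x}\in L^n$ and every $c\in L$.

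First I would record that $f$ is idempotent. This is immediate from Lemma \ref{lem4}, since $f$ is inf-homogeneous; in particular $f(\mathbf{c})=c$ for every constant vector $\mathbf{c}=(c,\dots,c)$. Next I would invoke the observation made in the discussion following Lemma \ref{lem31}, namely that for any vector $\mathbf{x}\in L^n$ and any constant vector $\mathbf{c}$ the pair $\mathbf{x},\mathbf{c}$ is always g-comonotone (a consequence of distributivity). Hence the assumed g-comonotone supremality of $f$ applies to this pair, yielding
$$f(\mathbf{c}\vee\mathbf{x})=f(\mathbf{c})\vee f(\mathbf{x})=c\vee f(\mathbf{x}),$$
where the second equality uses the idempotency established above. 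This is precisely sup-homogeneity, so Proposition \ref{prop2} then gives that $f$ is a Sugeno integral.

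I do not expect any genuine obstacle in this argument: once one notices that a constant vector is g-comonotone with an arbitrary vector and that inf-homogeneity forces idempotency via Lemma \ref{lem4}, the conclusion follows by a single application of g-comonotone supremality combined with the known characterization of Proposition \ref{prop2}. The only point deserving a moment's care is the claim that $\mathbf{x}$ and $\mathbf{c}$ are g-comonotone in an arbitrary distributive lattice, but this has already been verified explicitly after Lemma \ref{lem31}, so it can simply be cited rather than reproved.
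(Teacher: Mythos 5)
Your proposal is correct and follows essentially the same route as the paper's own proof: idempotency via Lemma \ref{lem4}, g-comonotonicity of any vector with a constant vector, a single application of g-comonotone supremality to get sup-homogeneity, and then Proposition \ref{prop2}. Nothing is missing.
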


\begin{proof}
If $f$ is inf-homogeneous, then it is idempotent. Since every constant vector $\mathbf{c}=(c,\dots,c)$, $c\in L$, is g-comonotone with any $\mathbf{x}\in L^n$, from g-comonotone supremality of $f$ it follows that $f(\mathbf{c}\vee \mathbf{x})=f(\mathbf{c})\vee f(\mathbf{x})=c \vee f(\mathbf{x})$, i.e., $f$ is sup-homogeneous. Consequently, Proposition \ref{prop2} yields that $f$ is a Sugeno integral.
\qed
\end{proof}

\begin{remark}\label{rem5}
{\rm Note, that using the dual arguments, it can be similarly shown that if $f$ is a sup-homogeneous and g-comonotone infimal aggregation function, then $f$ is a Sugeno integral.}
\end{remark}

In the next theorem we introduce an important simplification of Proposition \ref{prop2}. Following the notation introduced in \cite{Couceiro2}, we say that $f\colon L^n \to L$ is Boolean inf-homogeneous (resp. Boolean sup-homogeneous) if 
$$ f(\mathbf{c}\wedge \mathbf{x})=c \wedge f(\mathbf{x}) \quad\quad \big(\ f(\mathbf{c}\vee \mathbf{x})= c\vee f(\mathbf{x}) \ \big)$$ 
for all $\mathbf{x}\in \{0,1\}^n\subseteq L^n$ and for all constant vectors $\mathbf{c}=(c,\dots,c)$, $c\in L$.

\begin{lemma}\label{lem4n}
If a function $f\colon L^n\to L$ is Boolean inf-homogeneous and Boolean sup-homogeneous, then it is idempotent.
\end{lemma}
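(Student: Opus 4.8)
The plan is to prove idempotence in the sense established for Lemma~\ref{lem4}, namely $f(\mathbf{c})=c$ for every constant vector $\mathbf{c}=(c,\dots,c)$. The guiding observation is that the two distinguished vectors $\mathbf{0}=(0,\dots,0)$ and $\mathbf{1}=(1,\dots,1)$ both lie in $\{0,1\}^n$, so they are legitimate choices for the Boolean argument $\mathbf{x}$ in the two hypotheses; moreover every constant vector factors as $\mathbf{c}=\mathbf{c}\wedge\mathbf{1}=\mathbf{c}\vee\mathbf{0}$. Note that here, unlike in Lemma~\ref{lem4}, $f$ is not assumed to be an aggregation function, so the boundary values of $f$ must be extracted rather than taken for granted.

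First I would pin down these boundary values. Taking $c=0$ and $\mathbf{x}=\mathbf{1}$ in Boolean inf-homogeneity gives $f(\mathbf{0})=f(\mathbf{0}\wedge\mathbf{1})=0\wedge f(\mathbf{1})=0$; dually, taking $c=1$ and $\mathbf{x}=\mathbf{0}$ in Boolean sup-homogeneity gives $f(\mathbf{1})=f(\mathbf{1}\vee\mathbf{0})=1\vee f(\mathbf{0})=1$. Thus $f$ automatically satisfies $f(\mathbf{0})=0$ and $f(\mathbf{1})=1$. This is precisely the step that forces \emph{both} hypotheses to be used: in Lemma~\ref{lem4} a single homogeneity sufficed only because the aggregation-function boundary conditions supplied $f(\mathbf{1})=1$ (resp. $f(\mathbf{0})=0$) for free.

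Finally, for an arbitrary $c\in L$ I would apply Boolean inf-homogeneity with the Boolean vector $\mathbf{x}=\mathbf{1}$: since $\mathbf{c}\wedge\mathbf{1}=\mathbf{c}$, one obtains $f(\mathbf{c})=f(\mathbf{c}\wedge\mathbf{1})=c\wedge f(\mathbf{1})=c\wedge 1=c$, which is the desired idempotence. (Symmetrically, Boolean sup-homogeneity with $\mathbf{x}=\mathbf{0}$ together with $f(\mathbf{0})=0$ yields $f(\mathbf{c})=c\vee 0=c$.) I do not anticipate any genuine obstacle; the only point requiring care is recognizing that restricting $\mathbf{x}$ to $\{0,1\}^n$ is no real restriction, because exactly the vectors $\mathbf{0}$ and $\mathbf{1}$ that are needed to read off the boundary values and to collapse $\mathbf{c}\wedge\mathbf{1}$ (resp. $\mathbf{c}\vee\mathbf{0}$) back to $\mathbf{c}$ are themselves Boolean.
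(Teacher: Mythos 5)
Your proof is correct and takes essentially the same approach as the paper: both rest on applying Boolean sup-homogeneity with $\mathbf{x}=\mathbf{0}$ and Boolean inf-homogeneity with $\mathbf{x}=\mathbf{1}$ to an arbitrary constant vector $\mathbf{c}$. The paper merely compresses the argument into the single chain $c\leq f(\mathbf{0})\vee c=f(\mathbf{0}\vee\mathbf{c})=f(\mathbf{c})=f(\mathbf{1}\wedge\mathbf{c})=f(\mathbf{1})\wedge c\leq c$, which sandwiches $f(\mathbf{c})$ without ever computing $f(\mathbf{0})$ and $f(\mathbf{1})$ explicitly, whereas you extract these boundary values first and then substitute.
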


\begin{proof}
For any $c\in L$ we obtain:
$$ c\leq f(\mathbf{0})\vee c=f(\mathbf{0}\vee \mathbf{c})=f(\mathbf{c})= f(\mathbf{1}\wedge \mathbf{c})=f(\mathbf{1})\wedge c\leq c.$$
\end{proof} 

\begin{theorem}\label{thm4}
Let $L$ be a distributive lattice and $f\colon L^n\to L$ be an aggregation function on $L$. The function $f$ is a discrete Sugeno integral on $L$ if and only if it is Boolean 
sup-homogeneous and Boolean inf-homogeneous. 
\end{theorem}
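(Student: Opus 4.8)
The plan is to prove the two directions separately, with the forward direction being immediate and the converse carrying all the content. For the forward direction, if $f$ is a discrete Sugeno integral then Lemma \ref{lem2} already guarantees that $f$ is (fully) inf-homogeneous and sup-homogeneous; since Boolean inf-homogeneity and Boolean sup-homogeneity are merely the restrictions of these identities to inputs $\mathbf{x}\in\{0,1\}^n$, they hold a fortiori.

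For the converse, assume $f$ is Boolean inf-homogeneous and Boolean sup-homogeneous. First I would attach to $f$ the natural capacity $m\colon 2^{[n]}\to L$ defined by $m(I)=f(\mathbf{1}_I)$, where $\mathbf{1}_I\in\{0,1\}^n$ denotes the characteristic vector of $I$ (value $1$ on $I$, value $0$ elsewhere). Monotonicity of $f$ makes $m$ monotone, and the boundary conditions $m(\emptyset)=f(\mathbf{0})=0$ and $m([n])=f(\mathbf{1})=1$ are exactly the aggregation-function axioms, so $m$ is indeed a capacity. The goal is then to show $f=\mathsf{Su}_m$, which I would establish by proving two inequalities, each form of the expression \eqref{Sugeno_def} matching one of the two homogeneity hypotheses.

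For the inequality $\mathsf{Su}_m(\mathbf{x})\le f(\mathbf{x})$ I use the join form of the Sugeno integral. For a fixed $I\subseteq[n]$ put $a_I=\bigwedge_{i\in I}x_i$ and consider the vector $\mathbf{a}_I\wedge\mathbf{1}_I$ (value $a_I$ on $I$, value $0$ off $I$), which lies coordinatewise below $\mathbf{x}$. Monotonicity gives $f(\mathbf{a}_I\wedge\mathbf{1}_I)\le f(\mathbf{x})$, while Boolean inf-homogeneity (applicable since $\mathbf{1}_I$ is Boolean) evaluates the left-hand side as $a_I\wedge f(\mathbf{1}_I)=m(I)\wedge\bigwedge_{i\in I}x_i$; taking the join over all $I$ yields $\mathsf{Su}_m(\mathbf{x})\le f(\mathbf{x})$. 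Dually, for $f(\mathbf{x})\le\mathsf{Su}_m(\mathbf{x})$ I use the meet form. For fixed $I$ put $b_I=\bigvee_{i\in I}x_i$ and consider $\mathbf{b}_I\vee\mathbf{1}_{[n]\setminus I}$ (value $1$ off $I$, value $b_I$ on $I$), which lies coordinatewise above $\mathbf{x}$; monotonicity together with Boolean sup-homogeneity gives $f(\mathbf{x})\le b_I\vee f(\mathbf{1}_{[n]\setminus I})=m([n]\setminus I)\vee\bigvee_{i\in I}x_i$, and taking the meet over all $I$ produces $f(\mathbf{x})\le\mathsf{Su}_m(\mathbf{x})$. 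Combining the two inequalities gives $f=\mathsf{Su}_m$, so $f$ is a discrete Sugeno integral.

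I expect the only delicate point to be the bookkeeping for the two comparison vectors: one must check that $\mathbf{a}_I\wedge\mathbf{1}_I\le\mathbf{x}\le\mathbf{b}_I\vee\mathbf{1}_{[n]\setminus I}$ coordinatewise and, crucially, that Boolean homogeneity really applies, which works precisely because the variable part of each comparison vector is a genuine $\{0,1\}$-vector while all the non-Boolean data are pushed into the scalar $a_I$ or $b_I$. The edge cases $I=\emptyset$ and $I=[n]$, where empty meets and joins together with the values $m(\emptyset)=0$ and $m([n])=1$ enter, should be verified but cause no trouble. A pleasant feature of this route is that it recovers the representation directly and bypasses Proposition \ref{prop2} (and even Lemma \ref{lem4n}) altogether.
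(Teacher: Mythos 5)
Your proposal is correct and follows essentially the same route as the paper: the same induced capacity $m(I)=f(\mathbf{1}_I)$, the same two comparison vectors (the paper's $\mathbf{u}_I\wedge\mathbf{1}_I$ and $\mathbf{v}_I\vee\mathbf{1}_{[n]\smallsetminus I}$ are exactly your $\mathbf{a}_I\wedge\mathbf{1}_I$ and $\mathbf{b}_I\vee\mathbf{1}_{[n]\smallsetminus I}$), and the same two-inequality sandwich matching the join and meet forms of \eqref{Sugeno_def}. Your side remark that idempotency (Lemma \ref{lem4n}) can be bypassed is accurate: the paper invokes it only to rewrite $\bigvee_{i\in I}x_i$ as $f(\mathbf{v}_I)$ in an intermediate step, and, as your write-up shows, Boolean homogeneity alone suffices.
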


\begin{proof}
If $f$ is a Sugeno integral on $L$, then according to Proposition \ref{prop2}, $f$ is inf and sup-homogeneous. Consequently it is clear that the ordinary inf(sup)-homogeneity implies the Boolean inf(sup)-homogeneity. 

Conversely, assume that $f$ is Boolean inf and sup-homogeneous.
For $I\subseteq [n]$, put $m(I)=f(\mathbf{1}_I)$. Obviously $m\colon 2^{[n]}\to L$ is an $L$-valued capacity on the set $[n]$, since $f$ is non-decreasing and fulfills the boundary conditions.

For all $\mathbf{x}\in L^n$ we show that $\mathsf{Su}_m(\mathbf{x})\leq f(\mathbf{x})$ as well as $f(\mathbf{x})\leq \mathsf{Su}_m(\mathbf{x})$.
Given a subset $I\subseteq [n]$, let $\mathbf{u}_I=(\bigwedge_{i\in I}x_i,\dots,\bigwedge_{i\in I}x_i)$ be the constant vector. Then $\mathbf{u}_I\wedge \mathbf{1}_I\leq \mathbf{x}$. Indeed, 
$(\mathbf{u}_I\wedge \mathbf{1}_I)_i=0 \leq x_i$ if $i\notin I$, while $(\mathbf{u}_I\wedge \mathbf{1}_I)_i=\bigwedge_{j\in I}x_j\leq x_i$ provided $i\in I$.
As $f$ is idempotent (Lemma \ref{lem4}) and Boolean inf-homogeneous, we obtain 
\begin{equation}\label{eqn1} 
\bigwedge_{i\in I} x_i \wedge m(I)=\bigwedge_{i\in I}x_i\wedge f(\mathbf{1}_I)=f( \mathbf{u}_I\wedge \mathbf{1}_I)\leq f(x_1,\dots,x_n).
\end{equation}
Consequently, since \eqref{eqn1} holds for each subset $I\subseteq [n]$, we have
$$ \mathsf{Su}_m(\mathbf{x})=\bigvee_{I\subseteq [n]} \big( m(I) \wedge \bigwedge_{i\in I}x_i \big)\leq f(\mathbf{x}).$$

Conversely, for a subset $I\subseteq [n]$, let $\mathbf{v}_I=(\bigvee_{i\in I}x_i,\dots,\bigvee_{i\in I}x_i)$ be the constant vector. Then $\mathbf{x}\leq \mathbf{v}_I \vee \mathbf{1}_{[n]\smallsetminus I}$ and similarly as in the previous case we obtain
\begin{equation}\label{eqn2}
f(x_1,\dots, x_n)\leq f\big( \mathbf{v}_I \vee \mathbf{1}_{[n]\smallsetminus I}\big)=f(\mathbf{v}_I) \vee f(\mathbf{1}_{[n]\smallsetminus I})
=\displaystyle\bigvee_{i\in I}x_i \vee\ m([n]\smallsetminus I).
\end{equation}

Consequently, from \eqref{eqn2} using the dual expression for a Sugeno integral, we obtain
$$ f(\mathbf{x})\leq \bigwedge_{I\subseteq [n]}\big( m([n]\smallsetminus I) \vee \bigvee_{i\in I}x_i \big)=\mathsf{Su}_m(\mathbf{x}). $$
\qed
\end{proof}

Let us remark that Lemma \ref{lem4n} and Theorem \ref{thm4} were proved for bounded chains in \cite{Couceiro2}.

\begin{lemma}
Let $L$ be a bounded distributive lattice and $f\colon L^n\to L$ be an aggregation function. If $f$ is comonotone supremal, then $f$ is Boolean sup-homogeneous. Dually, if $f$ is comonotone infimal, then it is Boolean inf-homogeneous.
\end{lemma}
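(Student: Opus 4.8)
The plan is to derive the Boolean sup-homogeneity identity $f(\mathbf{c}\vee\mathbf{x})=c\vee f(\mathbf{x})$ for $\mathbf{x}\in\{0,1\}^n$ directly from comonotone supremality, the whole point being that — although a generic vector need not be comonotone with a constant one — every \emph{Boolean} vector is. Concretely, fix $c\in L$, write $\mathbf{c}=(c,\dots,c)$, and let $\mathbf{x}=\mathbf{1}_I$ with $I\subseteq[n]$. First I would verify that $\mathbf{c}$ and $\mathbf{1}_I$ are comonotone in the sense of Section~\ref{sec:2}: for any pair $i,j$ the constant coordinates satisfy simultaneously $c\le c$ and $c\ge c$, while the Boolean coordinates $(\mathbf{1}_I)_i,(\mathbf{1}_I)_j\in\{0,1\}$ are comparable, so whichever of $(\mathbf{1}_I)_i\le(\mathbf{1}_I)_j$ or $(\mathbf{1}_I)_i\ge(\mathbf{1}_I)_j$ holds is matched by the corresponding (trivially true) inequality on the constant side. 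Thus the defining condition of comonotonicity is met for every $I$.

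Second, applying comonotone supremality of $f$ to the comonotone pair $\mathbf{c},\mathbf{1}_I$ yields
$$ f(\mathbf{c}\vee\mathbf{1}_I)=f(\mathbf{c})\vee f(\mathbf{1}_I)\qquad (I\subseteq[n]). $$
Comparing this with the desired $f(\mathbf{c}\vee\mathbf{1}_I)=c\vee f(\mathbf{1}_I)$, I see that Boolean sup-homogeneity is equivalent to the family of equalities $f(\mathbf{c})\vee f(\mathbf{1}_I)=c\vee f(\mathbf{1}_I)$ ranging over $I\subseteq[n]$. Specialising to $I=\emptyset$, where $f(\mathbf{1}_\emptyset)=f(\mathbf{0})=0$, this collapses to the single identity $f(\mathbf{c})=c$; conversely, once $f(\mathbf{c})=c$ is known, every member of the family is immediate, since $f(\mathbf{c})\vee f(\mathbf{1}_I)=c\vee f(\mathbf{1}_I)$ then holds term by term. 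Hence the proof reduces entirely to evaluating $f$ on the diagonal.

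The hard part is exactly this reduced identity, $f(\mathbf{c})=c$ for all $c\in L$, i.e.\ idempotence of $f$ along the diagonal. Comonotone supremality by itself constrains $f$ on constants only loosely: as any two constant vectors are comonotone, the displayed identity applied to $\mathbf{b},\mathbf{c}$ gives $f(\mathbf{b}\vee\mathbf{c})=f(\mathbf{b})\vee f(\mathbf{c})$, so the assignment $c\mapsto f(\mathbf{c})$ is forced to be a $\{0,1\}$-preserving join-endomorphism of $L$ but nothing more. Identifying this endomorphism with the identity is therefore the pivotal step on which the whole argument turns, and it is precisely here that idempotence of $f$ must be brought to bear; once $f(\mathbf{c})=c$ is secured, the two steps above immediately deliver Boolean sup-homogeneity. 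The dual statement for a comonotone infimal $f$ follows by the order-reversing version of the same three steps, using again that $\mathbf{c}$ is comonotone with each $\mathbf{1}_I$ and the same diagonal identity $f(\mathbf{c})=c$ to reach Boolean inf-homogeneity.
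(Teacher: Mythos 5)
Your argument stalls at exactly the point you yourself identify, and the stall is fatal: the ``pivotal step'' $f(\mathbf{c})=c$ is never proved, only announced as the place where idempotence of $f$ ``must be brought to bear'' --- but idempotence is not among the hypotheses of the lemma, and, as your own third paragraph observes, comonotone supremality forces $c\mapsto f(\mathbf{c})$ to be nothing more than a $\{0,1\}$-preserving join-endomorphism of $L$. Such an endomorphism need not be the identity, so the diagonal identity is not derivable and your proof is incomplete. Worse, the missing step cannot be filled in: the paper's own example refutes the implication. The aggregation function $h$ with $h(\mathbf{0})=0$ and $h(\mathbf{x})=1$ otherwise is a $\vee$-homomorphism, hence comonotone supremal, yet for $c\in L\smallsetminus\{0,1\}$ one gets $h(\mathbf{c}\vee\mathbf{0})=h(\mathbf{c})=1\neq c=c\vee h(\mathbf{0})$; since Boolean sup-homogeneity applied to $\mathbf{x}=\mathbf{0}$ already forces $f(\mathbf{c})=c$, the implication ``comonotone supremal $\Rightarrow$ Boolean sup-homogeneous'' fails as stated whenever $|L|\geq 3$.

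You should know, however, that the paper's proof is no better: it verifies (as you do) that a constant vector is comonotone with every Boolean vector, and then concludes Boolean sup-homogeneity in one line, tacitly replacing $f(\mathbf{c})$ by $c$ --- precisely the substitution you flagged as the crux. So your reduction is a faithful, indeed more careful, rendering of the paper's argument; the difference is that you exposed the missing ingredient instead of eliding it. The lemma becomes correct if idempotence is added to its hypotheses, and that repaired version suffices for Corollary \ref{cor2}, where inf-homogeneity (resp.\ sup-homogeneity) supplies idempotence via Lemma \ref{lem4}. But where the lemma is invoked with no homogeneity assumption available --- Corollary \ref{cor1} and items (vi)--(vii) of Theorem \ref{thm3} --- the gap propagates: e.g.\ on the chain $L=\{0,a,1\}$ take the monotone map $g$ with $g(0)=0$ and $g(a)=g(1)=1$; then $g\circ\mathsf{Su}_m$ is comonotone (even g-comonotone) supremal and infimal, but it is not idempotent and hence not a Sugeno integral. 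The correct conclusion of your analysis is therefore not ``insert idempotence here'' but ``the statement requires idempotence as an additional hypothesis,'' and your proof should say so explicitly rather than presuppose it.
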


\begin{proof}
Let $\mathbf{c}=(c,\dots,c)$, $c\in L$ and $\mathbf{x}=(x_1,\dots,x_n)\in \{0,1\}^n$ be arbitrary vectors. Since the set $\{x_1,\dots,x_n\}\subseteq \{0,1\}$ forms a chain in $L$, it follows that $\mathbf{c}$ and $\mathbf{x}$ are comonotone. Hence the comonotone supremality (comonotone infimality) of $f$ implies that $f$ is Boolean sup-homogeneous (Boolean inf-homogeneous).
\qed
\end{proof}

Applying the previous lemma and Theorem \ref{thm4} we obtain the following corollaries.

\begin{corollary}\label{cor1}
If an aggregation function is comonotone supremal and comonotone infimal, then it is a Sugeno integral.
\end{corollary}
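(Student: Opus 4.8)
The plan is to chain the immediately preceding lemma together with Theorem~\ref{thm4}, so that the corollary reduces to a two-step formal deduction with no fresh computation. First I would use the hypothesis that $f$ is comonotone supremal: by the first half of the preceding lemma this already forces $f$ to be Boolean sup-homogeneous. Dually, the assumption that $f$ is comonotone infimal, combined with the second half of that same lemma, yields Boolean inf-homogeneity of $f$.

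With both Boolean homogeneity properties in hand, I would then invoke Theorem~\ref{thm4}, which characterizes the discrete Sugeno integrals on a distributive lattice $L$ as precisely the aggregation functions that are simultaneously Boolean sup-homogeneous and Boolean inf-homogeneous. Applying this characterization gives immediately that $f$ is a discrete Sugeno integral, which completes the argument.

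There is essentially no obstacle left to overcome at this stage, since all the substantive work has been absorbed into the preceding lemma and into Theorem~\ref{thm4}. The only conceptual point worth keeping in mind is \emph{why} this reduction is legitimate: the restriction of comonotone supremality (respectively infimality) to Boolean inputs is already enough to recover the full strength demanded by Theorem~\ref{thm4}. This works because the components of any vector in $\{0,1\}^n$ automatically form a chain in $L$, so every Boolean vector is comonotone with every constant vector $\mathbf{c}$; hence the comonotone equalities specialize exactly to the defining identities of Boolean sup- and inf-homogeneity, and no information about $f$ beyond the Boolean cube is required.
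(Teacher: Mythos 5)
Your proposal is correct and matches the paper exactly: the paper derives this corollary by combining the immediately preceding lemma (comonotone supremality/infimality implies Boolean sup/inf-homogeneity, since Boolean vectors are comonotone with every constant vector) with Theorem~\ref{thm4}. No further comment is needed.
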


\begin{corollary}\label{cor2}
If an aggregation function is inf-homogeneous and comonotone supremal or sup-homogeneous and comonotone infimal, then it is a Sugeno integral.
\end{corollary}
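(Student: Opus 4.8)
The plan is to reduce both cases to Theorem~\ref{thm4}, which characterizes discrete Sugeno integrals as exactly those aggregation functions that are simultaneously Boolean inf-homogeneous and Boolean sup-homogeneous. In each case it therefore suffices to check that the two stated hypotheses, taken together, deliver both of these Boolean conditions.

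First I would treat the case where $f$ is inf-homogeneous and comonotone supremal. Ordinary inf-homogeneity requires $f(\mathbf{c}\wedge\mathbf{x})=c\wedge f(\mathbf{x})$ for all $\mathbf{x}\in L^n$; restricting the arguments $\mathbf{x}$ to the Boolean cube $\{0,1\}^n$ immediately yields Boolean inf-homogeneity. For the other half, I would invoke the previous lemma, which states precisely that comonotone supremality implies Boolean sup-homogeneity. With both Boolean conditions established, Theorem~\ref{thm4} concludes that $f$ is a Sugeno integral. The symmetric case, where $f$ is sup-homogeneous and comonotone infimal, follows by the dual argument: sup-homogeneity gives Boolean sup-homogeneity by restriction, the dual half of the previous lemma turns comonotone infimality into Boolean inf-homogeneity, and Theorem~\ref{thm4} again applies.

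I do not expect any genuine obstacle here, since this is a direct corollary; the one point worth flagging is conceptual rather than technical. The hypotheses use ordinary comonotonicity rather than the stronger g-comonotonicity of Corollary~\ref{lem5}, so the conclusion cannot be reached by substituting constant vectors into the comonotone identity (as was done there), because $\mathbf{x}$ and $\mathbf{c}$ need not be comonotone once $L$ is not a chain. The result instead relies essentially on the new Boolean-homogeneity characterization of Theorem~\ref{thm4} combined with the previous lemma. Recognizing that these two earlier results fit together to supply exactly the two homogeneity types needed is the entire content of the proof.
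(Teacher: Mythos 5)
Your proposal is correct and matches the paper's own route exactly: the paper derives Corollary~\ref{cor2} by combining the preceding lemma (comonotone supremality/infimality implies Boolean sup/inf-homogeneity) with Theorem~\ref{thm4}, precisely as you do, with the Boolean restriction of the ordinary homogeneity hypothesis supplying the other half. Your observation about why the argument of Corollary~\ref{lem5} cannot be reused here (constant vectors need not be comonotone with arbitrary vectors when $L$ is not a chain) is also the correct reason the paper needed Theorem~\ref{thm4} for this case.
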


Summarizing our results we obtain the following seven equivalent axiomatic characterizations of the discrete $L$-valued Sugeno integrals:

\begin{theorem}\label{thm3}
Let $L$ be a bounded distributive lattice and let $f\colon L^n\to L$ be an $n$-ary aggregation function on $L$. The following conditions are equivalent: 
\begin{enumerate}
\item[\rm(i)] $f$ is a Sugeno integral on $L$.
\item[\rm(ii)] $f$ is inf-homogeneous and g-comonotone supremal.
\item[\rm(iii)] $f$ is sup-homogeneous and g-comonotone infimal.
\item[\rm(iv)] $f$ is inf-homogeneous and comonotone supremal.
\item[\rm(v)] $f$ is sup-homogeneous and comonotone infimal.
\item[\rm(vi)] $f$ is comonotone supremal and comonotone infimal.
\item[\rm(vii)] $f$ is g-comonotone supremal and g-comonotone infimal.
\item[\rm(viii)] $f$ is Boolean sup-homogeneous and Boolean inf-homogeneous.
\end{enumerate}
\end{theorem}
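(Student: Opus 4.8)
The plan is to route every equivalence through two pillars already established: Theorem \ref{thm4}, which yields directly (i) $\Leftrightarrow$ (viii), and a chain of weakenings that lets each of the remaining six conditions collapse onto the Boolean condition (viii). The guiding observation is that (viii) is the weakest of all eight properties, so it suffices to prove that (i) implies each of (ii)--(vii) and that each of (ii)--(vii) implies (viii); combined with Theorem \ref{thm4} this closes every loop.

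For the forward implications I would start from (i). By Lemma \ref{lem2} a Sugeno integral is simultaneously inf-homogeneous, sup-homogeneous, g-comonotone supremal and g-comonotone infimal, which immediately yields (ii), (iii) and (vii). Since by Lemma \ref{lem31} comonotone pairs are g-comonotone, g-comonotone supremality is a strictly stronger requirement than comonotone supremality (and likewise for the infimal versions, cf.\ Remark \ref{rem2}); hence (ii) weakens to (iv), (iii) weakens to (v), and (vii) weakens to (vi). Thus (i) entails all of (ii)--(vii), while (i) $\Rightarrow$ (viii) is the easy half of Theorem \ref{thm4}, as ordinary inf- and sup-homogeneity restrict to their Boolean counterparts.

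For the reverse direction I would funnel everything into (viii). Each ordinary inf- or sup-homogeneity trivially restricts to its Boolean form, and by the lemma relating comonotone supremality (infimality) to Boolean sup-homogeneity (inf-homogeneity), comonotone supremality gives Boolean sup-homogeneity while comonotone infimality gives Boolean inf-homogeneity. Reading off the pairs, (iv), (v) and (vi) each supply both Boolean homogeneities, so each implies (viii); and since (ii) $\Rightarrow$ (iv), (iii) $\Rightarrow$ (v), (vii) $\Rightarrow$ (vi) by the weakenings above, conditions (ii), (iii) and (vii) implicate (viii) as well. Finally (viii) $\Rightarrow$ (i) is the substantive half of Theorem \ref{thm4}, closing all equivalences. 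There is essentially no computational obstacle here, as the statement is a synthesis of the preceding lemmas; the only point requiring care is keeping the direction of the g-comonotone $\Rightarrow$ comonotone $\Rightarrow$ Boolean weakenings straight, so that each property is correctly situated between the strong g-comonotone conditions (ii), (iii), (vii) and the weak Boolean condition (viii).
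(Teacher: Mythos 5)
Your proposal follows the paper's own architecture almost exactly: Lemma \ref{lem2} (together with Lemma \ref{lem31} and Remark \ref{rem2}) for the forward implications, Theorem \ref{thm4} for the equivalence with (viii), and reduction of the comonotone conditions to Boolean homogeneity for the converses. The only structural deviation is that the paper proves (ii) $\Rightarrow$ (i) and (iii) $\Rightarrow$ (i) directly through Corollary \ref{lem5} and Remark \ref{rem5} --- inf-homogeneity forces idempotence by Lemma \ref{lem4}, constant vectors are g-comonotone with every vector, so g-comonotone supremality upgrades to full sup-homogeneity, and Proposition \ref{prop2} applies --- whereas you route (ii) and (iii) through (iv), (v) and then (viii). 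That rerouting is harmless in itself, and arguably more uniform.

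The genuine problem is the lemma on which your whole reverse direction (and the paper's Corollaries \ref{cor1} and \ref{cor2}) rests, namely ``comonotone supremal $\Rightarrow$ Boolean sup-homogeneous.'' This is false as stated. Comonotone supremality applied to a constant $\mathbf{c}$ and a Boolean $\mathbf{x}$ gives $f(\mathbf{c}\vee\mathbf{x})=f(\mathbf{c})\vee f(\mathbf{x})$, whereas Boolean sup-homogeneity requires $f(\mathbf{c}\vee\mathbf{x})=c\vee f(\mathbf{x})$; these agree only if $f(\mathbf{c})=c$, and idempotence does not follow from comonotone supremality: the paper's own example $h$, with $h(\mathbf{0})=0$ and $h(\mathbf{x})=1$ otherwise, is comonotone supremal yet $h(\mathbf{c})=1\neq c$. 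In your chains (ii) $\Rightarrow$ (iv) $\Rightarrow$ (viii) and (iii) $\Rightarrow$ (v) $\Rightarrow$ (viii) the gap is repairable, because the inf- (resp.\ sup-) homogeneity hypothesis supplies the missing idempotence via Lemma \ref{lem4}, exactly as in the paper's sound route for (ii) and (iii). But in (vi) $\Rightarrow$ (viii) and (vii) $\Rightarrow$ (vi) $\Rightarrow$ (viii) nothing supplies idempotence, and those implications are in fact false: on a bounded chain $L$ with $|L|\geq 3$, the same $h$ is comonotone supremal, comonotone infimal, g-comonotone supremal and g-comonotone infimal (on a chain, any g-comonotone pair with $\mathbf{x}\wedge\mathbf{y}=\mathbf{0}$ must have $\mathbf{x}=\mathbf{0}$ or $\mathbf{y}=\mathbf{0}$, since otherwise there are indices $i,j$ with $x_i>0=y_i$ and $y_j>0=x_j$, making \eqref{gcom_def} fail at $(i,j)$), yet $h$ is not idempotent and hence not a Sugeno integral. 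So your funneling of (vi) and (vii) into (viii) cannot be completed. To be fair, this defect is inherited from, not introduced by, your argument: the paper's unnamed lemma, its Corollary \ref{cor1}, and consequently items (vi) and (vii) of the theorem itself suffer from the same error, while your treatment of (i)--(v) and (viii) is correct once the idempotence repair is made explicit.
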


\begin{proof}
The implications (i) implies (ii) as well as (i) implies (iii) are due to Lemma \ref{lem2}. The converse implications follow from Corollary \ref{lem5} and Remark \ref{rem5}.

The implications (i) implies (iv) and (i) implies (v) are due to Lemma \ref{lem2} and Remark \ref{rem2}, while Corollary \ref{cor2} yields the converse implications.

The equivalence (i) iff (vi) follows from Lemma \ref{lem2}, Remark \ref{rem2} and Corollary \ref{cor1}.

The implication (i) implies (vii) follows from Lemma \ref{lem2}. The converse implication follows from Corollary \ref{cor1}, since if an aggregation function is g-comonotone supremal(infimal) it is also comonotone supremal(infimal), cf. Remark \ref{rem2}.

Finally, (i) iff (viii) is stated in Theorem \ref{thm4}.
\qed
\end{proof}

Let us note that from the computational point of view, to verify any of the two conditions in (vii) one must check $\left|L\right|\cdot 2^n$ pairs of vectors, while to formally check inf(sup)-homogeneity one must consider $\left|L\right|\cdot \left|L\right|^n$ pairs of vectors.

\section{Concluding remarks}\label{sec:5}

We have discussed new axiomatic characterizations of Sugeno integrals on bounded distributive lattices. Some of them are based on new notions of generalized comonotonicity and dual generalized comonotonicity of $L$-valued vectors, and they show new stronger properties satisfied by the Sugeno integrals. On the other hand, some other generalizes and simplifies an axiomatization given in \cite{Couceiro}, bringing as a by-product a drastic reduction of computational complexity of verification an aggregation function to be a Sugeno integral. For example, for a bounded distributive lattice $L$ with cardinality $|L| = k > 2$, the reduction factor is $(k/2)^n$. We expect applications of our results in information fusion and multicriteria decision support when the $L$-valued scales are considered, in particular in the case of linguistic scales different from chains.

Concerning the further research in this area, we aim to study the Sugeno integrals on particular lattices, such as the ordinal or horizontal sums of lattices. Here we expect, among others, to solve a challenging problem of ordinal (horizontal) sums of $L$-valued Sugeno integrals.

\section*{Acknowledgements}
The first author was supported by the international project Austrian Science Fund (FWF)-Grant Agency of the Czech Republic (GA\v{C}R) number 15-34697L; the second author by the Slovak VEGA Grant 1/0420/15; the third author by the project of Palack\'y University Olomouc IGA PrF2015010 and by the Slovak VEGA Grant 2/0044/16.

\section*{References}


\begin{thebibliography}{}


\bibitem{Couceiro}
Couceiro, M., Marichal, J.-L.: Characterizations of discrete Sugeno integrals as polynomial functions over distributive lattices, Fuzzy Sets and Systems 161, 694--707 (2010).

\bibitem{Couceiro2}
Couceiro, M., Marichal, J.-L.: Representations and Characterizations of Polynomial Functions on Chains, Journal of Multiple-Valued Logic \& Soft Computing 16, 65--86 (2010).



\bibitem{IJAR1}
Dubois, D., Prade, H., Rico, A.: Representing qualitative capacities as families of possibility measures, International Journal of Approximate Reasoning 58, 3--24 (2015).
  
\bibitem{IJAR4}
Girotto, B., Holzer, S.: A Chebyshev type inequality for Sugeno integral and comonotonicity, International Journal of Approximate Reasoning 52(3), 444--448 (2011). 

\bibitem{Goguen1967}
Goguen, J.A.: $L$-fuzzy sets, Journal of Mathematical Analysis and Applications 18(1), 145--174 (1967).

\bibitem{Grabisch et al 2009}
Grabisch, M., Marichal, J.-L., Mesiar, R., Pap, E.: Aggregation Functions, Cambridge University Press, Cambridge, 2009.

\bibitem{G1}
Gr\"atzer, G., Lattice Theory: Foundation, Birkh\"auser, Basel, 2011.

\bibitem{HP1}
Hala\v{s}, R., P\'{o}cs, J.: On the clone of aggregation functions on bounded lattices,    
Information Sciences 329, 381--389 (2016).

\bibitem{HP2}
Hala\v{s}, R., P\'{o}cs, J.: On lattices with a smallest set of aggregation functions,    
Information Sciences 325, 316--323 (2015).

\bibitem{HMP}
Hala\v{s}, R., Mesiar, R., P\'ocs, J.: A new characterization of the discrete Sugeno integral, Information Fusion 29, 84--86 (2016).

 

\bibitem{Mac}
Mac Lane, S.: Categories for the Working Mathematician. Second edition, Springer-Verlag, New York, 1998.

\bibitem{Marichal}
Marichal, J.-L.: Weighted lattice polynomials, Discrete Mathematics 309(4), 814--820 (2009).

\bibitem{MAR}
Marichal, J.-L.: An axiomatic approach of the discrete Sugeno integral as a
tool to aggregate interacting criteria in a qualitative framework, IEEE
Transactions on Fuzzy Systems 9(1), 164--172 (2001).
 
\bibitem{IJAR3}
Miranda, E., Montes, I.: Coherent updating of non-additive measures, International Journal of Approximate Reasoning 56, 159--177 (2015).

\bibitem{nove1}
Narukawa, Y., Murofushi, T.: Choquet integral and Sugeno integral as aggregation functions, in Information Fusion in Data Mining (ed. Torra V.), Studies in Fuzziness and Soft Computing 123, 27--39 (2003).


\bibitem{nove2}
Ralescu, D.A., Sugeno,M.: Fuzzy integral representation, Fuzzy Sets and Systems 84(2), 127--133 (1996).

\bibitem{IJAR2}
R\'ebill\'e, Y.: Integral representation of belief measures on compact spaces, International Journal of Approximate Reasoning 60, 37--56 (2015).

\bibitem{SAM}
Saminger-Platz, S., Mesiar, R., Dubois, D.: Aggregation Operators and Commuting.
IEEE Transactions on Fuzzy Systems 15(6), 1032–-1045 (2007).

\bibitem{Sug74}
Sugeno, M.: Theory of fuzzy integrals and its applications, Ph.D. Thesis, Tokyo Institute of Technology, Tokyo, 1974.

\bibitem{Zadeh1965}
Zadeh, L.A.: Fuzzy sets, Information and Control 8(3), 338--353 (1965). 


\end{thebibliography}
\end{document}